\documentclass[a4paper, 12pt, oneside]{article}
\pdfoutput=1 % For arXiv to use pdflatex, for package microtype
\usepackage[T1]{fontenc}
\usepackage[british]{babel}
\usepackage{lmodern}
\usepackage{etoolbox}
\usepackage{amsmath, amsthm, amssymb, bm, mleftright}
\usepackage{booktabs, longtable, caption, subcaption, multirow}
\usepackage[space]{grffile}
\usepackage[margin=2.5cm, footskip=1cm]{geometry} % footskip=1cm if page number too low
\usepackage{enumitem}
\setenumerate[1]{label=(\arabic*), ref=\arabic*}
\usepackage[protrusion=allmath]{microtype}
\usepackage{tikz}
\usetikzlibrary{cd, calc}
\usepackage{listings}
\usepackage[colorlinks, citecolor=blue, linkcolor=blue, linktoc=section]{hyperref}
\usepackage{mathtools}
\usetikzlibrary{calc,decorations.pathmorphing,shapes}

\allowdisplaybreaks
\usepackage[capitalise, noabbrev]{cleveref}
\creflabelformat{enumi}{(#2#1#3)}
\creflabelformat{enumii}{(#2#1#3)}
\creflabelformat{enumiii}{(#2#1#3)}

\usepackage{titlesec}
\titleformat*{\section}{\large\bfseries}
\titleformat*{\subsection}{\normalsize\bfseries}
\newlength{\VerticalSpaceAfterParagraph}
\setlength
  {\VerticalSpaceAfterParagraph}
  {0.35\baselineskip plus 0.15\baselineskip minus 0.15\baselineskip}
\titlespacing*{\paragraph}{0pt}{\VerticalSpaceAfterParagraph}{1em}

\usepackage{titling}
\pretitle{\vspace{-\baselineskip}\begin{center}\Large\bfseries}
\posttitle{\end{center}\vspace{-0.25\baselineskip}}
\preauthor{\begin{center}}
\postauthor{\end{center}}
\predate{\begin{center}}
\postdate{\end{center}}

\setlist
  {
    topsep = 5.0pt plus 2.0pt minus 3.0pt,
    partopsep = 1.5pt plus 1.0pt minus 1.0pt,
    parsep = 2.5pt plus 1.25pt minus 0.5pt,
    itemsep = 0pt plus 1.25pt minus 0.5pt
  }

\theoremstyle{plain}
\newtheorem{theorem}{Theorem}
\newtheorem*{theorem*}{Theorem}
\newtheorem{lemma}[theorem]{Lemma}
\newtheorem{corollary}[theorem]{Corollary}
\newtheorem{conjecture}[theorem]{Conjecture}

\theoremstyle{definition}
\newtheorem{example}[theorem]{Example}
\newtheorem{definition}[theorem]{Definition}

\theoremstyle{remark}
\newtheorem{remark}[theorem]{Remark}

\numberwithin{theorem}{section}
\numberwithin{equation}{section}

\usepgfmodule{oo}

% Argument 1: middle separator
% Argument 2: last separator
% If this is not a robust command, then the package `titling` breaks
\DeclareRobustCommand\ShowAuthors[2]{%
  \ShowAuthorsSignal.emit({#1},{#2})%
}

\pgfoonew \ShowAuthorsSignal=new signal()

% Argument: separator
\DeclareRobustCommand\ShowAffiliations[1]{%
  \ShowAffiliationsSignal.emit(#1)%
}

\pgfoonew \ShowAffiliationsSignal=new signal()

\newcommand\Author[1]{
  \pgfoonew \CurrentPerson=new person()
  \CurrentPerson.set author(#1)
}

\newcommand\Email[1]{
  \CurrentPerson.set email(#1)
}

\newcommand\Address[1]{
  \CurrentPerson.set address(#1)
}

\newcommand\FirstPerson{0}
\newcommand\LastPerson{}

\pgfooclass{person}{
  \attribute author;
  \attribute email;
  \attribute address;

  \method person() {
    \pgfoothis.get id(\theid)

    % If not set before, sets `\FirstPerson` to the current object id
    \ifnum \FirstPerson=0
      \edef\FirstPerson{\theid}
    \fi

    % Sets `\LastPerson` to the current object id
    \edef\LastPerson{\theid}

    \pgfoothis.get handle(\me)
    \ShowAuthorsSignal.connect(\me,show author)
    \ShowAffiliationsSignal.connect(\me,show affiliation)
  }

  \method get author(#1) {
    \pgfooget{author}{#1}
  }

  \method set author(#1) {
    \pgfooset{author}{#1}
  }

  \method get email(#1) {
    \pgfooget{email}{#1}
  }

  \method set email(#1) {
    \pgfooset{email}{#1}
  }

  \method get address(#1) {
    \pgfooget{address}{#1}
  }

  \method set address(#1) {
    \pgfooset{address}{#1}
  }

  % Argument 1: middle separator
  % Argument 2: last separator
  \method show author(#1,#2) {%
    \pgfoothis.get id(\theid)%
    \pgfooget{author}{\theauthor}%
    %
    % Printing the separator
    \ifnum\theid=\FirstPerson%
    \else%
      \ifnum\theid=\LastPerson%
        #2%
      \else%
        #1%
      \fi%
    \fi%
    \mbox{\theauthor}%
  }

  % Argument: separator
  \method show affiliation(#1) {%
    \pgfoothis.get id(\theid)%
    \pgfooget{author}{\theauthor}%
    \pgfooget{email}{\theemail}%
    \pgfooget{address}{\theaddress}%
    %
    % Printing the separator
    \ifnum\theid=\FirstPerson%
    \else%
      #1%
    \fi%
    \noindent\begin{minipage}{\linewidth}
      \noindent\begin{tabular}[t]{@{}l}
        \theauthor \quad \textsf{\theemail}\\
      \end{tabular}\\
      \theaddress%
    \end{minipage}%
  }
}

% https://tex.stackexchange.com/a/147205
% https://tex.stackexchange.com/a/593645
\newcommand\blfootnote[1]
  {%
    \begin{NoHyper}%
      \renewcommand\thefootnote{}%
      \footnote{#1}%
      \addtocounter{footnote}{-1}%
    \end{NoHyper}%
  }

\newcommand*\abs[1]{\left\lvert #1 \right\rvert}

\usepackage{mathtools}

% Shorter table of contents
% \usepackage[titles]{tocloft}
% \setlength{\cftbeforesecskip}{0\baselineskip}

\Author{Livia Campo}
\Address{Institut für Mathematik, Universität Wien, Oskar-Morgenstern-Platz 1, 1090 Wien, Austria}
\Email{livia.campo@univie.ac.at}

\Author{Tiago Duarte Guerreiro}
\Address{Departement Mathematik und Informatik, Universität Basel, Spiegelgasse 1, 4051 Basel, Switzerland}
\Email{tiago.duarteguerreiro@unibas.ch}

\Author{Erik Paemurru}
\Address{%
    Institute of Mathematics and Informatics, Bulgarian Academy of Sciences, Acad.\ G.~Bonchev Str.\ bl.~8, 1113, Sofia, Bulgaria.\\
    \textit{Former addresses:}
    Mathematik und Informatik, Universität des Saarlandes, 66123 Saarbrücken, Germany.\\
    Department of Mathematics, University of Miami, Coral Gables, Florida 33146, USA.%
}
\Email{algebraic.geometry@runbox.com}

\newcommand\Thanks{The first author was previously supported by the Korea Institute for Advanced Study (KIAS), grant No. MG087901. The second author was supported by Engineering and Physical Sciences
Research Council (EPSRC)  EP/V055399/1 and ERC StG Saphidir No. 101076412. The third author was supported by the Simons Investigator Award HMS, National Science Fund of Bulgaria, National Scientific Program ``Excellent Research and People for the Development of European Science'' (VIHREN), Project No.~KP-06-DV-7 and this is a contribution to Project-ID 286237555 -- TRR 195 -- by the Deutsche Forschungsgemeinschaft (DFG, German Research Foundation).}

\title{Blowups of smooth hypersurfaces, their birational geometry and divisorial stability}
\author{\ShowAuthors{, }{, }}
\date{4th~May 2026}
\newcommand\keywords{Fano hypersurfaces, Mori dream spaces, divisorial stability, birational models}
\newcommand\subjclass{14E30,14J30,14J40,14J45,14J70,32Q20}

\begin{document}

\maketitle

\begin{abstract}
Let $X$ be a smooth $n$-dimensional Fano hypersurface in $\mathbb P^{n+1}$ where $n \geq 3$.
Let $\Gamma$ be a smooth positive-dimensional complete intersection of~$X$, a hypersurface and one of more hyperplanes in $\mathbb P^{n+1}$.
Let $Y \to X$ be the blowup of $X$ along~$\Gamma$.
Let $\varphi \colon Y \rightarrow X$ be the blowup of $X$ along $\Gamma$. We describe the Mori chamber decomposition of $Y$ and its associated birational models. In particular, we show that $Y$ is a Mori dream space. We classify for which $X$ and $\Gamma$ the variety $Y$ is a Fano manifold and, if $X$ is a hyperplane, we classify the elementary Sarkisov links initiated by $\varphi$. Finally, we use this  Mori chamber decomposition above to prove that certain Fano manifolds as above do not admit a K\"ahler-Einstein metric.%
\blfootnote{\textup{2020} \textit{Mathematics Subject Classification}. \subjclass{}.}%
\blfootnote{\textit{Keywords}. \keywords{}.}%
\blfootnote{\Thanks{}}%
\end{abstract}

\section{Introduction}

We work over the field of complex numbers. We focus on smooth $n$-dimensional hypersurfaces $X\subset \mathbb P^{n+1}$, where $n\geq 3$, containing a smooth $k$-dimensional hypersurface $\Gamma \subset \mathbb P^{k+1}$ for some $1 \leq k \leq n-2$, and we study the geometry of the blow up $Y$ of $X$ along $\Gamma$.
We give a complete and constructive description of the Nef, Movable, and Effective cones of $Y$ showing, in particular, that it is a Mori dream space. In addition, we describe a Cox ring of~$Y$. By the Lefschetz hyperplane section theorem $\mathrm{Pic}(Y) \simeq \mathbb Z^2$ is generated by the  pull-back of $\mathcal{O}_X(1)$ and the exceptional divisor of the blowup.
As a consequence, $Y$ admits exactly two Mori contractions: one to $X$, and another one to a birational model of $Y$ explicitly determined by the geometry of $X$ and $\Gamma$.

Mori dream spaces were introduced by Hu and Keel \cite{HuKeel} as natural geometric objects that are well-behaved with respect to the Minimal Model Program. For instance, the contractions of a Mori dream space are induced from those of an ambient quasi-smooth projective toric variety $T$ whose Mori chamber decomposition is a refinement of the Mori chamber decomposition of the Mori dream space (see \cite[Prop.~2.11]{HuKeel}). This refinement can be non-trivial, for instance there are examples in which the Nef cone of $T$ is strictly smaller than the Nef cone of the Mori dream space (cf \cref{lem:rd}). We give a concrete example of a flat deformation of smooth Fano fourfolds whose central fibre is a smooth weak Fano fourfold and whose Nef cone is a subcone of the Nef cone of the central fibre, see Example \ref{ex:jump}. See also \cite{jump} for different examples.

However, the property of being a Mori dream space is not automatically maintained when performing common birational transformations such as blowups. For instance, \cite{CastravetMDSblowups} examines the case of toric varieties (that are Mori dream spaces by \cite[Corollary~2.4]{HuKeel}) blown up at the identity point, providing examples that are Mori dream spaces, and others that are not.
A previous work of Ottem \cite{OttemHypersurfProducts} characterises when a normal $\mathbb{Q}$-factorial hypersurface   in products of projective spaces is a Mori dream space. In \cite{Ito}, the author gives a criterion for a normal projective variety of Picard rank 2 to be a Mori dream space. In particular, he shows that the blowup at a general point of a general Fano complete intersection of dimension at least three is a Mori dream space. However, his construction does not explicitly provide the birational models induced by the Mori chamber decomposition of the blowup.

Indeed, the property of being a Mori dream space is closely related to finding birational models (cf \cite{OkawaImagesMDS, AbbanZucconiMDS, CampoDuarteGuerreiro, BlancLamyWeak,AbbanCheltsovPark3folds,Pae24Sextic,OkadaSolidFanoHypersurf}), also in relation to moduli problems \cite{CastravetTevelevMnoMDS}, and to those of rigidity (\cite{KollarRigidity10names,deFernexBirRigHypersurf, KOPP24,OkadaMFSI}).
Our work builds up on these in spirit and structure (especially \cite{OttemHypersurfProducts} and \cite{Ito}). Our main theorem is the following.

\newcommand\MainCones{
    Let $\Pi \cong \mathbb P^{k+1}$ be a linear subspace of $\mathbb P^{n+1}$, where $1 \leq k \leq n-2$.
    Let $X\subset \mathbb P^{n+1}$ be a smooth hypersurface and $\Gamma \subset \Pi$ a smooth hypersurface of $\Pi$ contained in~$X$.
    Let $\varphi \colon Y \rightarrow X$ be the
    blow up of  $X$ along $\Gamma$. Let $E$ be the $\varphi$-exceptional divisor and $H$ the pullback of a hyperplane section of~$X$.
    Then,
    \begin{align*}
        \mathrm{Nef}(Y)&= \begin{cases}
        \mathbb{R}_+[H]+\mathbb{R}_+[(\deg \Gamma) H-E] & \text{if\,\,\, $\deg \Gamma \not = \deg X$ or $\Pi \subset X,$} \\
        \mathbb{R}_+[H]+\mathbb{R}_+[H-E] & \text{if\,\,\, $\deg \Gamma = \deg X$  and $\Pi \not \subset X$,}
        \end{cases}\\
        \mathrm{Mov}(Y)&=\begin{cases}
        \mathbb{R}_+[H]+\mathbb{R}_+[(\deg \Gamma) H-E] & \text{if $X$ is a hyperplane and $\mathrm{codim_X\Gamma}=2$,} \\
        \mathbb{R}_+[H]+\mathbb{R}_+[H-E] & \text{otherwise,}
        \end{cases} \\
        \mathrm{Eff}(Y)&=\mathbb{R}_+[E]+\mathbb{R}_+[H-E].
    \end{align*}
    In particular, $Y$ is a Mori dream space with Cox ring $\mathbb C[u, x_0, \ldots, x_{n+1}, z] / I_Y$ where the ideal $I_Y$ is defined in \cref{lem:stricttransf}. Moreover, $\varphi$ induces a birational map to a
    \begin{itemize}
        \item  Fano fibration if $3-\deg X+\dim \Gamma >0$,
        \item  Calabi-Yau fibration if $3-\deg X+\dim \Gamma =0$,
        \item  fibration into canonically polarised varieties if $3-\deg X+\dim \Gamma <0$.
    \end{itemize}
}

\begin{theorem*} [= Theorem \ref{thm:main}]
    \MainCones{}
\end{theorem*}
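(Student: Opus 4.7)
The plan is to embed $Y$ in a smooth projective toric variety $T$ of Picard rank two. By \cref{lem:stricttransf} the Cox ring of $T$ is $\mathbb C[u,x_0,\ldots,x_{n+1},z]$ with the gradings $[u]=E$, $[x_i]=H$ for $i\leq k+1$, $[x_i]=H-E$ for $i\geq k+2$, and $[z]=rH-E$ where $r=\deg\Gamma$, and $Y$ is cut out in $T$ by two equations: the toric relation $uz=g(x_0,\ldots,x_{k+1})$ (where $g$ defines $\Gamma$ in $\Pi$), and the lift of the equation of $X$ obtained from the decomposition $f\in(x_{k+2},\ldots,x_{n+1},g)$, which is possible exactly because $X\supset\Gamma$ and whose form encodes the case distinction between $\deg X$ and $\deg\Gamma$. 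The toric cones of $T$ are standard data, and the work will be in comparing them to those of $Y$.

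The Effective cone is spanned by $E$ and $H-E$: every Cox generator lies in this cone, and both rays are realised by explicit divisors ($E=\{u=0\}$ and $H-E$ by the strict transform of a hyperplane section of $X$ through $\Gamma$). For the Nef cone, $H$ is nef as a pullback from $X$. The nef cone of $T$ is $\mathbb R_+[H]+\mathbb R_+[H-E]$, coming from its two toric Mori contractions onto $\mathbb P^{n+1}$ and onto $\mathbb P(1^{n-k},r)$; on $Y$ this cone can genuinely enlarge. Outside the exceptional case $\deg\Gamma=\deg X,\ \Pi\not\subset X$, I would show that the $(H-E)$-contracted fibres of $T$ generically miss $Y$ by a dimension count combined with the explicit form of the lift of the equation of $X$, so $Y$ admits a flip across the wall and the new nef ray is $rH-E$, witnessed by the strict transform of $\{g=0\}\cap X$. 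In the exceptional case one has $X\cap\Pi=\Gamma$ as divisors on $\Pi$, and $H-E$ itself is the nef extremal ray. The Movable cone then follows from a stable base-locus analysis: $H-E$ always moves (via pencils of hyperplanes containing $\Gamma$), while $rH-E$ moves without a fixed component only in the degenerate situation $X$ a hyperplane and $\mathrm{codim}_X\Gamma=2$.

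Mori-dreamness then follows from the finite generation of the Cox ring and \cite[Prop.~2.11]{HuKeel}. For the birational models, the second Mori contraction of $Y$ (possibly after a single flip) realises $Y$ as a fibration over a base $X'$; writing $-K_Y=(n+2-\deg X)H-(n-\dim\Gamma-1)E$ and applying adjunction to a generic fibre, the fibration type is controlled by the sign of $-K_{\text{fibre}}$, which a short intersection computation reduces to the sign of $3-\deg X+\dim\Gamma$. The main obstacle throughout is the nefness verification for $rH-E$: since the nef cone of the toric ambient can be strictly smaller than $\mathrm{Nef}(Y)$, one must carefully exclude curves on $Y$ of excessively negative $(rH-E)$-pairing, a step which relies on both the smoothness of $X$ and $\Gamma$ and the precise form of the equation-lift in the Cox ring.
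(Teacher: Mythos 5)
Your overall framework---re-embedding $X$ in $\mathbb P(1^{n+2},r)$ via $z=h_r$, realising $Y$ as the strict transform inside the toric blowup $T$ with the Cox ring of \cref{lem:stricttransf}, and comparing Mori chambers---is exactly the paper's, but your toric input is wrong and your wall-crossing logic runs backwards. By \cref{lem:conesToric}, $\mathrm{Nef}(T)=\mathbb R_+[H]+\mathbb R_+[rH-E]$; the cone you call $\mathrm{Nef}(T)$, namely $\mathbb R_+[H]+\mathbb R_+[H-E]$, is $\mathrm{Mov}(T)$. This is not a cosmetic slip: restriction of a nef class to a subvariety is nef, so if $H-E$ were nef on $T$ it would be nef on $Y$ \emph{unconditionally}, contradicting the generic case of the very statement you are proving; conversely, since $rH-E$ \emph{is} nef on $T$, it is automatically nef on $Y$ in all cases, so there are no ``curves on $Y$ of excessively negative $(rH-E)$-pairing'' to exclude, and your dimension count showing that contracted fibres ``generically miss $Y$'' could only ever enlarge $\mathrm{Nef}(Y)$, never shrink it to $\mathbb R_+[H]+\mathbb R_+[rH-E]$. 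The actual dichotomy, which your argument never reaches, is governed by the locus contracted at the $rH-E$ wall: by \cref{lem:restr} it is $\widetilde{\Pi}\vert_Y\simeq(g_{d-r}=0)\subset\mathbb P^{k+1}$, which is empty precisely when $g_{d-r}$ is a nonzero constant, i.e.\ when $r=d$ and $\Pi\not\subset X$; exactly then $\Theta\vert_Y$ is an isomorphism, the wall is not a wall for $Y$, and $\mathrm{Nef}(Y)$ grows to $\mathbb R_+[H]+\mathbb R_+[H-E]$ (this is the phenomenon flagged after \cite[Prop.~2.11]{HuKeel} in the introduction). Your claimed mechanism inverts which case is which.

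Your movable-cone analysis is likewise inverted and contradicts the theorem's statement. When $X$ is a hyperplane and $\operatorname{codim}_X\Gamma=2$, there is \emph{no} pencil of hyperplane sections through $\Gamma$: since $\Gamma$ (of degree $r>1$) spans $\Pi$, which is itself a hyperplane of $X\simeq\mathbb P^n$, the linear system $|H-E|$ consists of the single rigid divisor $\widetilde{\Pi}\vert_Y$, which is contracted divisorially to the point $\mathbf p_z\in Z$ by \cref{item:div} of \cref{lem:restr}; hence $H-E$ does not move and $\mathrm{Mov}(Y)=\mathbb R_+[H]+\mathbb R_+[rH-E]$---the opposite of your ``$H-E$ always moves''. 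Symmetrically, $rH-E$ is movable in \emph{every} case (it is always nef on $Y$, and in the exceptional case it equals $(r-1)H+(H-E)$, still in the movable cone), not only in the degenerate one. Finally, the Mori-dream-space step is asserted rather than proved: \cite[Prop.~2.11]{HuKeel} goes the other direction, and identifying $\mathbb C[u,x_0,\ldots,x_{n+1},z]/I_Y$ as $\operatorname{Cox}(Y)$ would require, e.g., surjectivity of the restrictions $H^0(T,D)\to H^0(Y,D\vert_Y)$ for all relevant $D$, which you never address; the paper instead deduces everything from the explicit chamber-by-chamber models of \cref{lem:r1}, \cref{lem:restr}, \cref{lem:rd} and \cref{lem:rdiffd}. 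The parts of your proposal that do survive are the effective cone and the adjunction computation $K_F\sim\mathcal O_F(d-k-3)$ yielding the trichotomy in the sign of $3-\deg X+\dim\Gamma$, but the central nef and movable claims would need to be redone along the lines above.
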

We additionally determine explicitly the birational models of $Y$. Moreover, we determine under which conditions $Y$ is a Fano variety assuming $X$ is a Fano variety, see Theorem \ref{thm:mainII}.

\newcommand\MainFano{
    Let $\Pi \cong \mathbb P^{k+1}$ be a linear subspace of $\mathbb P^{n+1}$, where $1 \leq k \leq n-2$.
    Let $X\subset \mathbb P^{n+1}$ be a smooth Fano hypersurface and $\Gamma \subset \Pi$ a smooth hypersurface of $\Pi$ contained in~$X$. Let $\varphi \colon Y \rightarrow X$ be the
    blow up of $X$ along $\Gamma$.
    Then $Y$ is a Fano variety if and only if one of the following holds:
    \begin{enumerate}
        \item $\deg X = \deg \Gamma$, $\Pi \not \subset X$ and $\deg \Gamma \leq 2+\dim \Gamma$, or
        \item all of the following hold:
        \begin{enumerate}[label=(\theenumi.\arabic*), ref=\theenumi.\arabic*]
        \item $\deg X \neq \deg \Gamma$ or $\Pi \subset X$,
        \item $\deg X \leq \dim X+1-\deg \Gamma \cdot (\mathrm{codim}_X\Gamma-1)$, and
        \item if $\Pi \subset X$ then $\dim \Gamma \leq \operatorname{codim}_X \Gamma - 2$.
        \end{enumerate}
    \end{enumerate}
    In particular, if $Y$ is a Fano variety, then $-K_{\Gamma}$ is nef.
}

The following theorem classifies  Sarkisov links initiated from the blowup of $\mathbb P^n$ along $\Gamma$.

\begin{theorem*} [= Theorem \ref{thm: Sarkisovlink}]
    Suppose $X = \mathbb P^n$.
    Let $\Pi \cong \mathbb P^{k+1}$ be a linear subspace of $X$, where $1 \leq k \leq n-2$.
    Let $\varphi \colon Y \rightarrow X$ be the blowup of $X$ along a smooth hypersurface $\Gamma \subset  \Pi$ contained in~$X$. Then $\varphi$ initiates a Sarkisov link if and only if both of the following hold:
    \begin{enumerate}
    \item $-K_{\Gamma}$ is nef, and
    \item $\mathrm{codim}_{X}\Gamma\leq2$ or $\dim \Gamma \geq 2$.
    \end{enumerate}
\end{theorem*}

Divisorial stability was introduced in \cite{Kentovolume} as a weaker notion of K-stability. Its importance comes from the seminal work in \cite{CDS1,CDS2,CDS3} proving the equivalence of the existence of K\"ahler-Einstein (KE) metrics on Fano manifolds and the algebraic-geometric notion of K-(poly)stability. We prove non-existence of KE metrics on blowups of projective spaces and quadrics along a line, generalising \cite[Lemma 3.22]{CalabiBook}.

\newcommand\MainStability{
    Let $\Gamma$ be a projective line in a smooth Fano hypersurface. Let $\varphi \colon Y \rightarrow X$ be the blowup of $X$ along $\Gamma$ and suppose that $Y$ is a Fano variety. Then, if $X=\mathbb P^n$ or $X$ is a quadric, then $Y$ is divisorially unstable. In particular, $Y$ is not K-stable and does not admit a K\"ahler-Einstein metric.
}

\begin{theorem*} [= Theorem \ref{thm:stabilityI}]
    \MainStability{}
\end{theorem*}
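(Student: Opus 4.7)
The plan is to leverage the Picard rank~$2$ structure together with the explicit cone description from the main theorem. Since $\mathrm{Eff}(Y)=\mathbb R_+[E]+\mathbb R_+[H-E]$, both extremal rays of the effective cone carry prime divisors: the exceptional divisor $E$ itself, and the strict transform $D$ of $\Pi'\cap X$ for a generic hyperplane $\Pi'\subset\mathbb P^{n+1}$ containing $\Gamma$ (which is irreducible by Bertini and meets $\Gamma$ with multiplicity one, so $[D]=H-E$). The reduction principle of \cite[Lemma~3.22]{CalabiBook}, which the theorem announces as being generalised, then reduces divisorial stability of $Y$ to the two inequalities $\beta_Y(E)>0$ and $\beta_Y(D)>0$, using $A_Y(E)=A_Y(D)=1$.

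Write $d=\deg X$. For $\Gamma$ a line, adjunction on $\Gamma\subset X$ gives $\deg N_{\Gamma/X}=n-d$, which, together with the projective bundle structure $E=\mathbb P(N_{\Gamma/X}^{\vee})$ over $\mathbb P^1$, determines every intersection number $H^i\cdot E^{n-i}$ on $Y$. Combined with $-K_Y=(n+2-d)H-(n-2)E$, this produces the volumes $\mathrm{vol}(-K_Y-tE)$ and $\mathrm{vol}(-K_Y-s(H-E))$ as piecewise polynomials in the parameter: on the nef range the volume is simply the top self-intersection, and on the movable range one uses the Zariski decomposition dictated by the small modification of $Y$ described in the main theorem. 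Integrating yields closed-form expressions for $S_Y(E)$ and $S_Y(D)$, and hence for $\beta_Y(E)=1-S_Y(E)$ and $\beta_Y(D)=1-S_Y(D)$.

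With the Fano hypothesis and $\deg\Gamma=\dim\Gamma=1$, the Fano classification \cref{thm:mainII} forces $d\in\{1,2,3\}$, and the three remaining cases are checked individually. The expected sign analysis shows that both $\beta$-invariants are strictly positive precisely for $d=3$, while for $d=1$ (so $X=\mathbb P^n$) and $d=2$ (quadric) at least one of them is non-positive. The main obstacle will be keeping the volume bookkeeping uniform across the two shapes of $\mathrm{Nef}(Y)$ that the main theorem distinguishes by the dichotomy $\Pi\subset X$ versus $\Pi\not\subset X$, and checking that $d=3$ is indeed the common sign-flip point in both subcases; in the end this reduces to a short polynomial inequality in $d$ and $n$.

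The final sentence of the statement is then immediate: existence of a K\"ahler--Einstein metric on a smooth Fano $Y$ forces K-polystability by Chen--Donaldson--Sun and Tian, and K-polystability implies divisorial stability. Hence when $X=\mathbb P^n$ or $X$ is a quadric, the failure of divisorial stability just established rules out any K\"ahler--Einstein metric on $Y$.
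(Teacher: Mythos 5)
Your computational core coincides with the paper's proof: the same intersection numbers ($H^n=d$, $H^{n-i}\cdot E^i=0$ for $0<i<n-1$, $H\cdot E^{n-1}=(-1)^n$, $E^n=(-1)^{n+1}(d-n)$, which your normal-bundle computation $\deg N_{\Gamma/X}=n-d$ reproduces), the same closed-form $S_Y(E)$ obtained by integrating $(-K_Y-uE)^n$ over $u\in[0,4-d]$, and the same endgame reducing to a polynomial inequality in $(d,n)$ that holds for $d\in\{1,2\}$ and fails for $d=3$. There are two real differences. First, you additionally test a divisor $D$ with $[D]=H-E$; the paper's printed proof only computes $\beta(E)$, so for the stability direction ($d=3$) your plan is in fact more complete than the paper's. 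However, your justification is misattributed: \cite[Lemma~3.22]{CalabiBook} is the specific instability computation for the blowup of a quadric threefold along a line (the case this theorem generalises), not a reduction principle. The reduction you want is nonetheless true and should be argued directly: $S_Y(F)$ depends only on the numerical class of $F$, the volume is monotone under adding pseudo-effective classes, and every nonzero effective integral class in $\mathrm{Eff}(Y)=\mathbb R_+[E]+\mathbb R_+[H-E]$ dominates either $E$ or $H-E$; hence $\sup_F S_Y(F)=\max\{S_Y(E),S_Y(D)\}$ and divisorial stability reduces to your two beta invariants. Second, several of your anticipated obstacles are vacuous here: a line has $r=1$, so by \cref{lem:r1} the second contraction is a morphism (there is no small $\mathbb Q$-factorial modification, hence no flop-induced Zariski chamber), along the $E$-ray no Zariski decomposition is needed since the nef and pseudo-effective thresholds coincide ($-K_Y-uE$ is nef iff pseudo-effective iff $u\in[0,4-d]$, as the paper records), along the $D$-ray the negative part is a multiple of $E$ with $\mathrm{vol}(aH+bE)=a^n d$ for $a,b\geq 0$ because $\varphi_*\mathcal O_Y(mbE)=\mathcal O_X$, and the dichotomy of Nef shapes in \cref{thm:main} collapses when $r=1$. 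Finally, tighten the last step: K-polystability implies only divisorial \emph{semi}stability, so ``failure of divisorial stability'' alone does not preclude a K\"ahler--Einstein metric; what does is the strict inequality $\beta(E)<0$ for $d\in\{1,2\}$, which your (and the paper's) open inequality indeed delivers.
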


We also show in \cref{thm:divisorial unstability 1000} that when $\Gamma =\mathbb P^k$, then $Y$ is divisorially unstable for high enough $k$ assuming the dimension of $X$ is bounded by $1000$. Moreover, in \cref{conj:divisorial unstability} we predict that this holds for any dimension $n\geq k+2$.

\paragraph{Acknowledgements.} We would like to thank Igor Krylov for answering a question on intersection numbers and Calum Spicer for the question leading to \cref{exa:flop from smooth to singular}. The second author wishes to thank Paolo Cascini for the hospitality provided at Imperial College London during the course of this work.

\section{Preliminaries}

\begin{definition} [{\cite[Definition~1.10]{HuKeel}}]
    A normal projective variety $Y$ is a \textbf{Mori dream space} if the following conditions hold.
    \begin{enumerate}
        \item $Y$ is $\mathbb{Q}$-factorial and $\operatorname{Pic}(Y)$ is finitely generated.\label{first MDS condition}
        \item $\operatorname{Nef}(Y)$ is the affine hull of finitely many semi-ample line bundles. \label{second MDS condition}
        \item There is a finite collection of small $\mathbb{Q}$-factorial modifications $f_i \colon Y \dashrightarrow Y_i$ such that each $Y_i$ satisfies \ref{first MDS condition} and \ref{second MDS condition} and $\overline{\operatorname{Mov}}(Y)= \bigcup f_i^*(\operatorname{Nef}(Y_i))$.
    \end{enumerate}
\end{definition}

Throughout the paper, $X, \Gamma, \Pi$, $\Pi'$, $n$, $k$, $d$ and $r$ are as in \cref{set:hyp}. We recall that a $\mathbb Q$-factorial toric variety is a Mori dream space \cite[Corollary~2.4]{HuKeel}. Indeed they have Cox rings which are polynomial algebras generated by sections corresponding to the 1-dimensional rays of the defining fan.

\subsection{Setting} \label{set:hyp}
Let $\Pi$ be a $(k+1)$-dimensional linear subspace of $\mathbb P^{n+1}$
 such that $2 \leq \dim \Pi \leq n-1$, where $n\geq 3$. Let $\Pi'$ be  a maximal dimensional linear subspace of $\mathbb P^{n+1}$ that does not intersect $\Pi$. Let $\Gamma$ be a smooth hypersurface of $\Pi$ of degree $r$ and $X$ be an $n$-dimensional smooth hypersurface of $\mathbb P^{n+1}$ of degree $d$ containing~$\Gamma$. Consider $\varphi\colon Y\rightarrow X$ be the blowup of $X$ along $\Gamma$. To summarise, we have
\[
   \begin{aligned}
   \dim X & = n, & \qquad
   \deg X & = d,\\
   \dim \Gamma & = k, & \qquad
   \deg \Gamma & = r.
   \end{aligned}
\]
We have the linear equivalences
\[
    \begin{aligned}
    -K_X & \sim \mathcal{O}_X(n+2-d), &
    \\
    -K_{\Gamma} & \sim \mathcal{O}_{\Gamma}(k+2-r). &
    \end{aligned}
\]
We denote the variables on $\mathbb P^{n+1}$ by $x_0, \ldots, x_{n+1}$. After a suitable change of coordinates, we have
\[
\begin{aligned}
    \Pi & = (x_{k+2}=\cdots =x_{n+1}=0) \simeq \mathbb P^{k+1} \subset \mathbb P^{n+1},\\
    \Pi' & = (x_{0}=\cdots =x_{k+1}=0) \simeq \mathbb P^{n-k-1} \subset \mathbb P^{n+1},\\
    \Gamma & = (h_{r}(x_0,\ldots,x_{k+1})=0) \subset \Pi \subset \mathbb P^{n+1},\\
    X & = (f_{d}(x_0,\ldots,x_{n+1})=0) \subset \mathbb{P}^{n+1},
\end{aligned}
\]
where $h_r \in \mathbb{C}[x_0,\ldots,x_{k+1}]$ and $f_{d} \in \mathbb C[x_0,\ldots,x_{n+1}]$ are homogeneous polynomials of respectively degrees $r$ and~$d$. Since $X$ contains $\Gamma$, there exist homogeneous degree $d-1$ polynomials $F_i \in \mathbb C [x_0,\ldots,x_{n+1}]$ and a homogeneous degree $d-r$ polynomial $g_{d-r} \in \mathbb C[x_0,\ldots,x_{k+1}]$ such that
\[
    f_{d}=x_{k+2}F_{k+2}+\cdots+ x_{n+1}F_{n+1} + h_rg_{d-r}.
\]
If $r>d$, then $g_{d-r}$ is necessarily the zero polynomial. If $X \cong \mathbb P^n$, then we assume $X\colon (x_{n+1}=0)$.

\begin{lemma} \label{lem:sing}
    Both of the following hold:
    \begin{enumerate}[label=(\alph*), ref=\alph*]
    \item \label{itm:smooth implies ineq} if $d \geq 2$ and $\Pi \subset X$, then $\dim \Pi \leq \operatorname{codim}_{\mathbb P^{n+1}} \Pi - 1$, and
    \item \label{itm:ineq implies general is smooth} if $\dim \Pi \leq \operatorname{codim}_{\mathbb P^{n+1}} \Pi - 1$, then a general
    hypersurface of $\mathbb P^{n+1}$ of degree $d$ containing $\Pi$ is smooth.
    \end{enumerate}
\end{lemma}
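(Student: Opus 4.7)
The plan is to extract both statements from the explicit form of $f_d$ in \cref{set:hyp}. First I would use $\Pi \subset X$ to rewrite $f_d$: restricting $f_d$ to $\Pi$ sets $x_{k+2}, \ldots, x_{n+1}$ to zero and leaves $h_r g_{d-r}$, which must vanish identically in $\mathbb{C}[x_0,\ldots,x_{k+1}]$. Since $\Gamma = (h_r=0)$ is a nonempty hypersurface of $\Pi$, we have $h_r \not\equiv 0$, hence $g_{d-r} \equiv 0$ and $f_d = \sum_{j=k+2}^{n+1} x_j F_j$. Differentiating: for $i \leq k+1$, every summand of $f_d$ is divisible by some $x_j$ with $j \geq k+2$, so $\partial f_d/\partial x_i|_\Pi \equiv 0$; for $i \geq k+2$, the product rule gives $\partial f_d/\partial x_i|_\Pi = F_i|_\Pi$. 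Therefore $\operatorname{Sing}(X) \cap \Pi$ is cut out, scheme-theoretically, by the $n-k$ forms $F_{k+2}|_\Pi, \ldots, F_{n+1}|_\Pi$ on $\Pi \simeq \mathbb{P}^{k+1}$.

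Part (a) is then the contrapositive of a dimension count on projective space: if $\dim \Pi = k+1 \geq n-k = \operatorname{codim}_{\mathbb{P}^{n+1}} \Pi$, then the common zero locus of these $n-k$ hypersurfaces in $\mathbb{P}^{k+1}$ is nonempty, producing a singular point of $X$ on $\Pi$. For part (b), under the hypothesis $n \geq 2k+2$, I would apply Bertini twice. The linear system $\mathcal{L}$ of degree-$d$ hypersurfaces containing $\Pi$ has base locus exactly $\Pi$ (any $p \notin \Pi$ has some $x_{j_0}(p) \neq 0$ and can be separated by a suitable $F_{j_0}$), so Bertini in characteristic zero ensures that a general member of $\mathcal{L}$ is smooth away from $\Pi$. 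On $\Pi$, since the restriction map $F_j \mapsto F_j|_\Pi$ is surjective onto forms of degree $d-1$ on $\Pi$ and we have $n-k \geq k+2 > \dim \Pi$, a general tuple $(F_{k+2},\ldots,F_{n+1})$ produces forms whose common zero locus in $\mathbb{P}^{k+1}$ is empty, so the general member of $\mathcal{L}$ is smooth along $\Pi$ as well.

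I do not anticipate a serious obstacle — the argument is essentially a Jacobian computation followed by Bertini. The only technical point is the compatibility of the two genericity statements, but both cut out nonempty Zariski-open subsets of the finite-dimensional parameter space of tuples $(F_{k+2},\ldots,F_{n+1})$, so their intersection is also open and nonempty, yielding a general smooth $X$ as required.
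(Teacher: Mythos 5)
Your proposal is correct and follows essentially the same route as the paper: both use the Jacobian computation to identify $\mathrm{Sing}(X)\cap\Pi$ with the common zero locus of $F_{k+2}|_\Pi,\ldots,F_{n+1}|_\Pi$ in $\mathbb{P}^{k+1}$, prove (a) by the dimension count for $n-k$ hypersurfaces in $\mathbb{P}^{k+1}$, and prove (b) by Bertini applied off the base locus $\Pi$ together with smoothness of a general member along $\Pi$. The only cosmetic difference is the final step of (b), where the paper exhibits the explicit member $(x_{k+2}x_0^{d-1}+\cdots+x_{n+1}x_{k+1}^{d-1}=0)$, which is smooth along $\Pi$, while you instead note that a general tuple $(F_{k+2},\ldots,F_{n+1})$, using $n-k\geq k+2$, restricts to forms on $\Pi$ with empty common zero locus; the two arguments are interchangeable.
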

\begin{proof}
  Since $\Pi \subset X$ we have
    $$
    X = (x_{k+2}F_{k+2}+\cdots + x_{n+1}F_{n+1}=0) \subset \mathbb P^{n+1}.
    $$

    \labelcref{itm:smooth implies ineq}
    We take partial derivatives restricted to the locus $(x_{k+2}=\cdots =x_{n+1}=0)$. The Jacobian is
    $$
    \begin{pmatrix}
        0 & \ldots & 0 & F_{k+2} & \ldots & F_{n+1}
    \end{pmatrix},
    $$
    where, by assumption, the polynomials $F_i$, $k+2\leq i\leq n+1$ are homogeneous of positive degree. Hence, $X$ is singular along $(x_{k+2}=\cdots =x_{n+1}=F_{k+2} = \ldots = F_{n+1}=0)$. That is, along
    $$
    S \colon (F_{k+2} = \ldots = F_{n+1}=0) \subset \Pi.
    $$
    The locus $S$ is non-empty if and only if its dimension is non-negative. We have
    $$
    \mathrm{dim} S \geq k+1-(n+1-(k+2)+1)=2k-n+1
    $$
    and the claim follows.

    \labelcref{itm:ineq implies general is smooth} By Bertini's theorem, the singularities of a general hypersurface of degree $d$ containing $\Pi$ are on the hyperplane~$\Pi$. On the other hand, the hypersurface $(x_{k+2} x_0^{d-1} + \ldots + x_{n+1} x_{k+1}^{d-1}=0)$ contains and is smooth along~$\Pi$.
\end{proof}

\begin{corollary} \label{cor:sing}
    Let $\Pi \cong \mathbb P^{k+1}$ be a linear subspace of $\mathbb P^{n+1}$, where $1 \leq k \leq n-2$.
    Let $X\subset \mathbb P^{n+1}$ be a smooth hypersurface and $\Gamma \subset \Pi$ a smooth hypersurface of $\Pi$ contained in~$X$.
    If $\mathrm{dim}\Gamma \geq \mathrm{codim}_{X}\Gamma - 1$, then $\deg  \Gamma \leq \deg X$ or $X$ is a hyperplane.
\end{corollary}

\begin{proof}
    Follows from \cref{lem:sing} using $\dim \Pi = \dim \Gamma + 1$ and $\operatorname{codim}_{\mathbb P^{n+1}} \Pi = \operatorname{codim}_X \Gamma$.
\end{proof}

\section{Birational models}

We remind that throughout the paper, $X, \Gamma, \Pi$, $\Pi'$, $n$, $k$, $d$ and $r$ are as in \cref{set:hyp}.

\subsection{Birational models of the ambient space} \label{sec:bir models of ambient space}

We define $\mathbb P := \mathrm{Proj} \big(\mathbb C [x_0,\ldots,x_{n+1},z]\big) \simeq \mathbb P(1^{n+2},r)$ where $\deg z = r$. Let $\Phi \colon T \longrightarrow \mathbb P$ be the blowup of $\mathbb P$ along $(x_{k+2}=\cdots=x_{n+1}=z=0) \cong \Pi$. Let $H$ be the pullback of a hyperplane in $\mathbb P$ and $E$ the $\Phi$-exceptional divisor. Then $T$ is a rank 2 toric variety with
$$
\mathrm{Pic}(T)=\mathbb Z [H]\oplus \mathbb Z [E]
$$
and $E\simeq \mathbb P(\mathcal{E})$ where $\mathcal{E}$ is the vector bundle  $\mathcal{O}_{\mathbb P^{k+1}} \oplus \mathcal{O}_{\mathbb P^{k+1}}(r-1)^{\oplus (n-k)}$.
Define
\[
\begin{aligned}
    \Pi_{\mathbb P} &= (x_{k+2}=\cdots=x_{n+1} = 0)\subset \mathbb P,\\
    \Pi_{\mathbb P}' &= (x_{0}=\cdots=x_{k+1} = 0)\subset \mathbb P.
\end{aligned}
\]
\newcommand\PA{\ensuremath{\widetilde{\Pi_{\mathbb P}}}}
\newcommand\PB{\ensuremath{\widetilde{\Pi_{\mathbb P}'}}}
Denote by $\PA$ and $\PB$ the strict transforms of $\Pi_{\mathbb P}$ and $\Pi_{\mathbb P}'$, respectively.

\begin{lemma} \label{lem:conesToric}
    The cone of effective divisors of $T$ is generated by $E$ and $H-E$. Moreover,
    $$
    \mathrm{Nef}(T)=\mathbb R_+[H]\oplus \mathbb R_+[rH-E] \subseteq
    \mathbb R_+[H]\oplus \mathbb R_+[H-E] = \mathrm{Mov}(T).
     $$
\end{lemma}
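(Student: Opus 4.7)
The plan is to exploit that $T$ is a $\mathbb{Q}$-factorial projective toric variety of Picard rank~$2$, being the blowup of the toric ambient $\mathbb{P} = \mathbb{P}(1^{n+2}, r)$ along the torus-invariant subvariety $\Gamma_{\mathbb{P}}$. Its Cox ring is the polynomial ring $\mathbb{C}[x_0, \ldots, x_{n+1}, z, u]$, with $u$ the Cox coordinate cutting out~$E$; computing the classes of the strict transforms of the toric prime divisors of~$\mathbb{P}$ gives the $\mathbb{Z}^2$-grading on $\Pic(T) = \mathbb{Z}[H] \oplus \mathbb{Z}[E]$:
\[
\deg x_i = H \ (0 \le i \le k+1), \quad \deg x_j = H-E \ (k+2 \le j \le n+1), \quad \deg z = rH-E, \quad \deg u = E.
\]
All three cones can then be read off from the configuration of the four degree rays $(1, 0), (1, -1), (r, -1), (0, 1)$ in $\Pic(T) \otimes \mathbb{R}$.

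For $\mathrm{Eff}(T)$ I would use that the effective cone of a $\mathbb{Q}$-factorial projective toric variety is generated by the classes of its torus-invariant prime divisors, which are exactly the four classes above; the two outermost of these rays are~$E$ and~$H-E$, giving $\mathrm{Eff}(T) = \mathbb{R}_+[E] + \mathbb{R}_+[H-E]$.

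For $\Nef(T)$ my approach is to exhibit two semi-ample classes and then close up the cone with test curves. The class $H$ is semi-ample as the pullback $\Phi^*\mathcal{O}_{\mathbb{P}}(1)$. For $rH-E$, I would list the bidegree-$(r,-1)$ monomials in the Cox ring---namely~$z$, the monomials $x_j x_{i_1}\cdots x_{i_{r-1}}$ with $j \geq k+2$ and $i_\bullet \leq k+1$, together with the $u$-divisible monomials $x_{j_1}\cdots x_{j_s} u^{s-1} x_{i_1}\cdots x_{i_{r-s}}$ for $2 \leq s \leq r$---and verify base-point-freeness by a short case analysis according to whether the point lies on~$E$ and whether all $x_i$ with $i\le k+1$ vanish, using that $T$-stability excludes both $(x_{k+2}=\cdots=x_{n+1}=z=0)$ and $(x_0=\cdots=x_{k+1}=u=0)$. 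For the reverse inclusion I would test against a line~$\ell_1$ in a $\mathbb{P}^{n-k}$-fibre of $E \to \Gamma_{\mathbb{P}}$, which satisfies $\ell_1 \cdot H = 0$ and $\ell_1 \cdot E = -1$, and a torus-invariant curve~$\ell_2$ contracted by $|rH-E|$, satisfying $\ell_2 \cdot H = 1$ and $\ell_2 \cdot E = r$.

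Finally, for $\Mov(T)$ the class $H-E$ is movable: its sub-system generated by the strict transforms of the hyperplanes $\sum_{j \geq k+2} a_j x_j = 0$ of~$\mathbb{P}$ has base locus the strict transform of $V(x_{k+2}, \ldots, x_{n+1})$, of codimension $n-k \geq 2$ in~$T$ by the hypothesis $\dim \Pi \leq n-1$ from \cref{set:hyp}. Conversely, $E$ is a fixed component of $|aH + bE|$ for every $b > 0$, because $u$ is the only Cox generator whose degree has positive $E$-coefficient, and hence divides every bidegree-$(a, b)$ monomial with $b > 0$. Together these pin $\Mov(T)$ to $\mathbb{R}_+[H] + \mathbb{R}_+[H-E]$; the displayed inclusion $\Nef(T) \subseteq \Mov(T)$ is then immediate. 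The hardest step will be the base-point-freeness of $|rH-E|$ along~$E$, since one must account for the $u$-divisible sections precisely at those points where all $x_i$ with $i \leq k+1$ vanish.
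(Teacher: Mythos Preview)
Your argument is correct and self-contained, but the paper takes a much shorter route: its entire proof is the single line ``This follows from \cite[Theorem~15.1.10]{CoxToricVarieties}'', invoking the GKZ/secondary-fan description of the nef, movable, and effective cones of a simplicial projective toric variety directly from the degree matrix of its Cox generators. With the four rays $E$, $H$, $rH-E$, $H-E$ that you computed, that theorem outputs all three cones at once.

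What you do instead is unpack this computation by hand: you verify base-point-freeness of $|rH-E|$ section by section, produce explicit test curves $\ell_1,\ell_2$ to pin down the walls of $\Nef(T)$, and check movability of $H-E$ and rigidity of $E$ directly. This buys you something the paper's citation does not: the specific curves and loci (a fibre line in $E$, a line in $\widetilde\Pi$, the base locus $V(x_{k+2},\ldots,x_{n+1})$) that realise the boundaries are exactly the ones the paper needs later when restricting the ambient contractions to~$Y$ in Lemmas~\ref{lem:r1}--\ref{lem:rdiffd}. So your approach is longer but more informative for what follows; the paper's approach is terser and relies on the reader knowing the toric machinery.
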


\begin{proof}
    This follows from
    \cite[Theorem 15.1.10]{CoxToricVarieties}.
\end{proof}
There are two maps from $T$ as in the following diagram

\[ \begin{tikzcd}
T \arrow[rd, "\alpha"] \arrow[d,swap,"\Phi"]  \\%
\mathbb P & Q
\end{tikzcd}
\]
where $Q$ is the image of $\alpha$. The morphism $\Phi$ is given by the linear system $|mH|$ and the morphism $\alpha$ is given by the linear system $|m(rH-E)|$ for some big enough~$m$.
Note that if $r > 1$, then $rH - E$ is in the interior of the effective cone by \cref{lem:conesToric}, therefore big.
By Lemma \ref{lem:conesToric}, the divisor $H - E$ is in the boundary of the effective cone, therefore not big. If $r > 1$, then $rH - E$ is movable, therefore $\alpha$ does not contract any divisor. So the morphism $\alpha$ is a fibration over a lower-dimensional variety if $r=1$ and a small contraction whenever $r>1$. We will write these maps explicitly using the Cox coordinates of $T$,
$$
\mathrm{Cox}(T) = \bigoplus_{(m_1,m_2) \in \mathbb{Z}^2} H^0(T,m_1H+m_2E) \; .
$$

Since $T$ is toric, the Cox ring of $T$ is isomorphic to a (bi)-graded polynomial ring. In this case, defining
\[
\mathbb A^{n+3} = \operatorname{Spec} \mathbb C[u, x_0, \ldots, x_{k+1}, z, x_{k+2}, \ldots, x_{n+1}],
\]
$T$ is the geometric quotient
\[
    \frac{\mathbb A^{n+3} \setminus V\bigl((u, x_0, \ldots, x_{k+1}) \cap (z, x_{k+2}, \ldots, x_{n+1})\bigr)}{\mathbb C^* \times \mathbb C^*},
\]
where the $\mathbb C^* \times \mathbb C^*$-action is given by the matrix
\begin{equation} \label{eqn:matrix}
\left( \begin{array}{ccccccccccc}
u & x_0 & \cdots & x_{k+1} & z & x_{k+2} & \cdots & x_{n+1}\\
0 & 1 & \cdots & 1 & r & 1 & \cdots & 1\\
-1 & 0 & \cdots & 0 & 1 & 1 & \cdots & 1
\end{array}
\right)
\end{equation}
meaning that for all $(\lambda, \mu) \in \mathbb C^* \times \mathbb C^*$, we have
\[
(\lambda, \mu) \cdot (u, x_0, z, x_1, \ldots, x_{n+1}) = (\mu^{-1} u, \lambda x_0, \lambda^{r} \mu z, \lambda \mu x_1, \ldots, \lambda \mu x_{n+1}).
\]
Note that we have ordered the column vectors in the matrix \labelcref{eqn:matrix} in anticlockwise order.
The Cox ring of $T$ is the $\mathbb Z^2$-graded polynomial ring
\begin{equation*}
\mathrm{Cox}(T) \simeq \mathbb{C}[u, x_0, \ldots, x_{k+1}, z, x_{k+2}, \ldots, x_{n+1}],
\end{equation*}
where again the grading comes from the matrix \labelcref{eqn:matrix}, meaning that the bidegree of a variable is the corresponding column vector in the matrix \labelcref{eqn:matrix}.
Note that $u \in H^0(T,\mathcal{O}_T(E))$. The map $\Phi\colon T \rightarrow \mathbb P$ is given in coordinates by
\[
\begin{aligned}
T & \to \mathbb P = \operatorname{Proj} \mathbb C[x_0, \ldots, x_{k+1}, z, x_{k+2}, \ldots, x_{n+1}]\\
(u, x_0, \ldots, x_{k+1}, z, x_{k+2}, \ldots, x_{n+1}) & \mapsto (x_0, \ldots, x_{k+1}, u z, u x_{k+2}, \ldots, u x_{n+1}).
\end{aligned}
\]

\begin{lemma} \label{lem:toric}
The blowup $\Phi$ induces a  map $\Psi$ such that
\begin{itemize}
    \item if $r=1$, the map  $\Psi=\alpha$ is a $\mathbb P^{k+2}$-bundle over $\mathbb{P}^{n-k}$,
    \item if $r>1$, the map $\Psi$ is the composition of
    \begin{enumerate}
        \item a small $\mathbb{Q}$-factorial modification $\Theta \colon T \dashrightarrow T'$ which contracts $\PA\simeq \mathbb P(1^{k+2},r)$ to a point and extracts $E \cap \PB \simeq \mathbb P^{n-k-1}$ and is an isomorphism otherwise. This operation defines $T'$, namely $T'$ is the geometric quotient
        \[
        \frac{\mathbb A^{n+3} \setminus V\bigl(u, x_0, \ldots, x_{k+1}, z) \cap (x_{k+2}, \ldots, x_{n+1})\bigr)}{\mathbb C^* \times \mathbb C^*},
        \]
        where $\mathbb A^{n+3}$ has the variables $u, x_0, \ldots, x_{k+1}, z, x_{k+2}, \ldots, x_{n+1}$ and the $\mathbb C^* \times \mathbb C^*$-action is given by the matrix \labelcref{eqn:matrix}.
        The variety $T'$ is singular along $E \cap \PB$ and each point is a cyclic quotient singularity of type
        $$\frac{1}{r-1}(\underbrace{1,\ldots,1}_{k+3},\underbrace{0,\ldots,0}_{n-k-1})$$
        followed by
        \item a $\mathbb{P}(1^{k+3},r-1)$-bundle over $\mathbb P^{n-k-1}$.
    \end{enumerate}
\end{itemize}
\end{lemma}

\begin{proof}
If $r=1$, $\alpha \colon T \rightarrow Q$ is a fibration given by
$$(u,x_0,\ldots,x_{n+1},z) \longmapsto (x_{k+2},\ldots,x_{n+1},z).$$
and $T'\simeq \mathbb P^{n-k}$. Moreover, the fibres are isomorphic to $\mathbb P^{k+2}$.

 On the other hand suppose $r>1$. Then $rH-E$ is big and in the interior of the movable cone of $T$. Therefore $\Theta$ decomposes as
 \begin{equation} \label{eqn:alpha}
 \begin{tikzcd}
T \arrow[rr, "\Theta",dashed] \arrow[dr,swap,"\alpha"] && T' \arrow[dl,swap,"\alpha'",swap] \\%
& Q &
\end{tikzcd}
\end{equation}
where $\alpha$ is given by the complete linear system $\abs{rH-E}$ on $T$ and similarly $\alpha'$ is given by the complete linear system $\abs{rH-E}$ on $T'$. Notice that $T$ and $T'$ have the same Cox ring but slightly different irrelevant ideals, namely $I_T = (u,x_0,\ldots,x_{k+1}) \cap (z,x_{k+2},\ldots,x_{n+1})$ and
$I_{T'} = (u,x_0,\ldots,x_{k+1},z) \cap (x_{k+2},\ldots,x_{n+1})$.
Notice that $\alpha$ is given by all monomials of bidegree $\binom{r}{1}$. It is then clear that $\alpha$ contracts  $\PA \colon (x_{k+2}=\ldots=x_{n+1}=0)\subset T$ and $\alpha'$ contracts $E \cap \PB \colon (u=x_{0}=\ldots=x_{k+1}=0)\subset T'$ to the same point and that these are isomorphisms away from these loci. Hence, $\Theta$ swaps $\PA$ with $E\cap \PB$. This introduces some cyclic quotient singularities in general. Indeed, any point in $E\cap \PB$ is covered by the $n-k$ affine patches
$$U_{zx_i}:=(zx_i\not =0)=\mathrm{Spec}\big(\mathbb C[u,x_0,\ldots,x_{k+1},z,x_{k+2},\ldots,\widehat{x_i},\ldots,x_{n+1},z^{-1},x_i^{-1}]\big)^{\mathbb C^*\times \mathbb C^*}$$
where $k+2\leq i\leq n+1$.  The $\mathbb C$-algebra of $\mathbb C^*\times \mathbb C^*$-invariant regular functions of $U_{zx_i}$ is generated by
$$
\frac{f_{r-1}(ux_i,x_0,\ldots,x_{k+1})x_i}{z}, \quad \frac{x_j}{x_i},\,\, k+2 \leq j\not = i\leq n+1
$$
where $f_{r-1}$ is any function of degree $r-1$. On the other hand consider the action $\mathbf{\mu}_{r-1} \curvearrowright \mathbb A^{n+2}$ of the multiplicative cyclic group of order $r-1$ on $\mathbb A^{n+2}$, given by
$$
\epsilon\cdot (u,x_0,\ldots,\widehat{x_i},\ldots,x_{n+1}) = (\epsilon u,\epsilon x_0,\ldots,\epsilon x_{k+1},x_{k+2},\ldots,\widehat{x_i},\ldots,x_{n+1})
$$
where $\epsilon$ is a primitive $r-1$-root of unity. Then $\mathrm{Spec}\big(\mathbb C[u,x_0,\ldots,x_{k+1}]\big)^{\mathbf{\mu}_{r-1}}$ is the affine variety given by the vanishing of the polynomials $f_{r-1}(u,x_0,\ldots,x_{k+2})$. We showed that
$$
U_{zx_i} \simeq \frac{1}{r-1}(\underbrace{1,\ldots,1}_{k+3},\underbrace{0,\ldots,0}_{n-k-1}).
$$
Hence, $T'$ is singular along $E\cap \PB$ and each of its points is a cyclic quotient singularity as above.  Moreover $\mathrm{codim}_{T'}E\cap \PB=n+2-(n-k-1)=k+3\geq 4$.

The nef cone of $T'$ is generated by $rH-E$ and $H-E$, see
\cite[Theorem 15.1.10]{CoxToricVarieties}. The divisor $H-E$ is not big and so it induces a fibration $\Phi' \colon T' \rightarrow \PA$ given in coordinates by
$$(u,x_0,\ldots,x_{n+1},z) \longmapsto (x_{k+2},\ldots,x_{n+1}).$$ The fibres of $\Phi'$ are isomorphic to $\mathbb P(1^{k+3},r-1)$.
\end{proof}

\subsection{\texorpdfstring
    {Birational models of $X$}
    {Birational models of X}
}

We recall that $X, \Gamma, \Pi$, $\Pi'$, $n$, $k$, $d$ and $r$ are as in \cref{set:hyp}.
The morphism $\Phi$, the weighted projective $\mathbb P$, $\Pi_{\mathbb P}$ and $\Pi_{\mathbb P}'$ are as in \cref{sec:bir models of ambient space}.
    We embed $\mathbb P^{n+1}$ in $\mathbb P$ as $(z - h_r= 0)$ and $X$ in $\mathbb P$ as
\[
X \colon (z - h_r= f_{d}(x_0,\ldots,x_{n+1})=0) \subset \mathbb{P}.
\]
The variety $Y$ is the strict transform of $X$ under $\Phi$.
The blowup $\varphi \colon Y \rightarrow X$ of $X$ along $\Gamma$ is the restriction $\Phi\vert_Y$. We again denote by $E$ the $\varphi$-exceptional divisor. In the same way, we again denote by $H$ a pull-back of a hyperplane section of $X$ under $\varphi$.

\begin{lemma} \label{lem:stricttransf}
    The variety $Y$ is given by the ideal
    \[
    I_Y = (x_{k+2}F^{'}_{k+2}+\cdots +x_{n+1}F^{'}_{n+1} + zg_{d-r},~ zu-h_r) \subseteq \mathbb C[u, x_0, \ldots, x_{n+1}, z]
    \]
    where
    \[
    F^{'}_j(u,x_0,\ldots,x_{n+1},z) :=F_j(x_0,\ldots,x_{k+1},ux_{k+2},\ldots,ux_{n+1},uz).
    \]
\end{lemma}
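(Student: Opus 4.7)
The plan is to realise $X$ inside the weighted projective space $\mathbb{P}$ as a complete intersection of two hypersurfaces, chosen so that its scheme-theoretic intersection with the blowup centre $\Gamma_\mathbb{P}$ is exactly $\Gamma$, and then to compute the strict transform of $X$ in $T$ by a direct substitution in the Cox ring.

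First I would embed $X \hookrightarrow \mathbb{P}$ as $V(f_d,\, z - h_r)$, using the section $\mathbb{P}^{n+1} \hookrightarrow \mathbb{P}$, $(x_0:\cdots:x_{n+1}) \mapsto (x_0:\cdots:x_{n+1}:h_r(x_0,\ldots,x_{k+1}))$ of the natural forgetful projection $\mathbb{P} \dashrightarrow \mathbb{P}^{n+1}$. A direct ideal calculation gives $X \cap \Gamma_\mathbb{P} = V(x_{k+2},\ldots,x_{n+1},z,h_r) = \Gamma$ scheme-theoretically; this is the point of using $z - h_r$ rather than the a priori more natural choice $z = 0$, which would pick out $\Gamma \cup V(g_{d-r}) \subset \Pi$ instead. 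With this embedding, the restriction of $\Phi \colon T \to \mathbb{P}$ to the strict transform of $X$ recovers precisely $\varphi \colon Y \to X$ as the blowup along $\Gamma$.

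Next I would pull back the two defining equations of $X$ via the explicit Cox description $\Phi \colon (u, x_0, \ldots, x_{n+1}, z) \mapsto (x_0, \ldots, x_{k+1}, ux_{k+2}, \ldots, ux_{n+1}, uz)$. Applying the decomposition $f_d = x_{k+2}F_{k+2} + \cdots + x_{n+1}F_{n+1} + h_r g_{d-r}$ gives
\[
    \Phi^*(f_d) = u\bigl(x_{k+2}F'_{k+2} + \cdots + x_{n+1}F'_{n+1}\bigr) + h_r g_{d-r}, \qquad \Phi^*(z - h_r) = uz - h_r.
\]
Modulo the second generator we may substitute $h_r = uz$ into the first, producing
\[
\Phi^*(f_d) \equiv u\bigl(x_{k+2}F'_{k+2} + \cdots + x_{n+1}F'_{n+1} + z g_{d-r}\bigr) \pmod{uz - h_r}.
\]
Since $u$ is the section cutting out the $\Phi$-exceptional divisor $E$, extracting this factor of $u$ produces exactly the two generators of $I_Y$ claimed in the statement.

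The one delicate step is to confirm that removing this factor of $u$ really gives the strict transform of $X$ in $T$, and does not leave an additional embedded primary component supported on $E$. I would settle this by checking that the two displayed generators have the correct bidegrees to cut out a codimension-two subvariety of $T$, and that their zero locus on $T \setminus E$ coincides with $\Phi^{-1}(X \setminus \Gamma)$. The latter is immediate: on the open set $\{u \neq 0\}$ the map $\Phi$ is an isomorphism onto $\mathbb{P} \setminus \Gamma_\mathbb{P}$, and the two generators become the defining equations $f_d$ and $z - h_r$ of $X$ under this isomorphism. Combined with the dimension count, this forces the ideal on the right-hand side to equal $I_Y$.
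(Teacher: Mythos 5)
Your setup and the substitution computation coincide with the paper's: the same embedding of $X$ as the complete intersection $V(f_d,\,z-h_r)\subset\mathbb P$, the same pullback along the Cox description of $\Phi$, and the same extraction of the factor $u$. The problem is the step you yourself flag as delicate, which is in fact the entire content of the lemma, and your proposed resolution does not close it. Agreement of the two loci over $T\setminus E$ determines the ideal only up to saturation with respect to $u$, and ``correct bidegrees'' cannot force codimension two: nothing in your argument excludes an irreducible component, or an embedded primary component, of $V(f',\,zu-h_r)$ supported inside $E=V(u)$, which is exactly what would make the displayed ideal differ from the ideal of the strict transform. The telltale sign is that your proof never uses smoothness of $X$ along $\Gamma$ (nor smoothness of $\Gamma$, which gives irreducibility of $h_r$), yet the statement fails without it: if $f_d\in(x_{k+2},\ldots,x_{n+1},h_r)^2$, say each $F_j=\sum_{l\ge k+2}x_lA_{jl}+h_rB_j$ and $g_{d-r}=0$, then modulo $zu-h_r$ one may replace $h_r$ by $zu$ and obtains $F'_j\equiv u\bigl(\sum_l x_lA'_{jl}+zB'_j\bigr)$, so $f'$ itself becomes divisible by $u$; the ideal $(f',zu-h_r)$ is then not $u$-saturated and its zero locus contains the $n$-dimensional locus $V(u,h_r)\subset E$ as an extra top-dimensional component. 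So no abstract dimension count can force the equality of ideals; a genuine geometric input is needed precisely where you wave at it.

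The paper supplies this input by proving directly that $(f',\,zu-h_r)$ is saturated with respect to $u$: assuming a relation $f'G+(zu-h_r)H=u^mK$ with $K\notin(f',zu-h_r)$, it uses that $(u,h_r)$ is prime (here smoothness of $\Gamma$ enters, via irreducibility of $h_r$) to deduce $f'\in(u,h_r)$, then transports back along $\Phi_{\mathrm{alg}}$ and specialises $z\mapsto h_r$ to conclude $f\in(x_{k+2},\ldots,x_{n+1},h_r)^2$, i.e.\ that $X$ is singular along $\Gamma$ --- a contradiction. If you prefer to stay on your route, it can be repaired: over each point $p\in\Gamma$ the restriction $f'|_{u=0}$ is a linear form in the fibre coordinates $x_{k+2},\ldots,x_{n+1},z$ with coefficients $\bigl(F_{k+2}|_\Pi(p),\ldots,F_{n+1}|_\Pi(p),g_{d-r}(p)\bigr)$, and these do not all vanish precisely because $X$ is smooth at $p$; this shows no component of $V(f',zu-h_r)$ lies in $E$, so the two generators cut out a codimension-two complete intersection, which is Cohen--Macaulay, hence unmixed, hence $u$ is a nonzerodivisor on the quotient and the ideal is $u$-saturated. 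Both the fibrewise smoothness computation and the unmixedness argument are absent from your write-up; without them, the final sentence of your proposal is an assertion rather than a proof.
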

\begin{proof}
   The hypersurface $X$ is isomorphic to the codimension 2 complete intersection
    \begin{align*}
        x_{k+2}F_{k+2}+\cdots x_{n+1}F_{n+1} + h_rg_{d-r}&=0 \\
        z-h_r&=0
    \end{align*}
    Since $\Gamma$ is smooth and isomorphic to $V(h_r) \subseteq \mathbb P^{k+1}$, we find that $h_r$ is irreducible. Denote
    \[
    f' := x_{k+2}F^{'}_{k+2}+\cdots +x_{n+1}F^{'}_{n+1} + zg_{d-r}.
    \]
    To show that $Y$ is the strict transform of $X$ with respect to~$\Phi$, it suffices to show that the ideal $(f', zu-h_r)$ is saturated with respect to~$u$.

    Suppose that the ideal $(f', zu-h_r)$ of $\mathbb C[u, x_0, \ldots, x_{n+1}, z]$ is not saturated with respect to~$u$. Then there exists an element $K$ in the saturation of the ideal $(f', zu-h_r)$ that is not in $(f', zu-h_r)$. More precisely, there exist a positive integer $m$ and homogeneous polynomials $G, H, K \in \mathbb C[u, x_0, \ldots, x_{n+1}, z]$ such that $K$ is not in the ideal $(f', zu-h_r)$ and we have the equality
    \[
    f'G + (zu-h_r)H = u^m K.
    \]
    Without loss of generality, $u$ does not divide $K$ and $m$ is the smallest possible.

    We show that $G \notin (u, zu-h_r) = (u, h_r)$. Assume there exist homogeneous polynomials $G_1, G_2 \in \mathbb C[u, x_0, \ldots, x_{n+1}, z]$ such that
    \[
    G = u G_1 + (z u - h_r) G_2.
    \]
    Then
    \[
    u^m K = u f' G_1 + (zu - h_r) (H + f' G_2).
    \]
    We see that $u$ divides $H + f' G_2$. This contradicts $m$ being the smallest possible. Therefore, $G \notin (u, zu-h_r) = (u, h_r)$.

    Since the ideal $(u, h_r)$ is prime, we find that $f' \in (u, h_r)$. Let
    \[
    \Phi_{\mathrm{alg}}\colon \mathbb C[x_0, \ldots, x_{n+1}, z] \to \mathbb C[u, x_0, \ldots, x_{n+1}, z]
    \]
    be the $\mathbb C$-algebra homomorphism corresponding to~$\Phi$. Note that the image of $\Phi_{\mathrm{alg}}$ is in the $\mathbb C$-subalgebra given by
    \[
    \mathbb C[x_0, \ldots, x_{n+1}, z, ux_{k+1}, \ldots, ux_{n+1}, uz].
    \]
    Since $\Phi_{\mathrm{alg}}(f + (z-h_r)g_{d-r}) = uf' \in (u, h_r)^2$, we find that $f + (z-h_r)g_{d-r}$ is in the ideal $(x_{k+1}, \ldots, x_{n+1}, z, h_r)^2$. Applying the homomorphism $z \mapsto h_r$, we see that $f \in (x_{k+2}, \ldots, x_{n+1}, h_r)^2$. Therefore, $X$ is singular along~$\Gamma$, a contradiction. This shows that $Y$ is the strict transform of~$X$.
\end{proof}

The following lemma is classical.

\begin{lemma} \label{lem:r1}
    Suppose $r=1$.
    \begin{enumerate}
        \item If $X=\mathbb P^n$, then the projection $\pi \colon X \dashrightarrow \mathbb P^{n-k-1}$ away from $\Pi$ can be decomposed into the blowup of $\Gamma$ followed by a $\mathbb P^{k+1}$-bundle over $\mathbb P^{n-k-1}$.
    \item If $X \not = \mathbb P^n$, then the projection $\pi \colon X \dashrightarrow \mathbb P^{n-k}$ can be decomposed as the blowup of $\Gamma \subset X$ followed by a fibration to $\mathbb P^{n-k}$ whose general fibre $F$ is isomorphic to a smooth hypersurface such that $K_F\sim \mathcal{O}_F(d-k-3)$.
    \end{enumerate}

\end{lemma}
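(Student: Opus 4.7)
My plan is to apply \cref{lem:toric} with $r=1$ and restrict the resulting toric fibration $\Theta \colon T \to T' = \mathbb{P}^{n-k}$ to the strict transform $Y \subset T$, then analyse the fibres case by case using the defining ideal from \cref{lem:stricttransf}. Since for $r=1$ the map $\Theta$ is already a morphism realising $T$ as a $\mathbb{P}^{k+2}$-bundle, its restriction $\pi := \Theta|_Y \colon Y \to \mathbb{P}^{n-k}$ is a morphism, and $\pi \circ \varphi^{-1} \colon X \dashrightarrow \mathbb{P}^{n-k}$ is the rational map given in coordinates by $(x_{k+2},\ldots,x_{n+1},h_1)$, i.e.\ the linear projection from $\Gamma$ inside $\mathbb P^{n+1}$.

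For part (1), I take $X = \mathbb{P}^n$ as $(x_{n+1}=0)$, so $d=1$, $g_{d-r}=0$, and $F_{n+1} \equiv 1$ with the other $F_j$ zero; \cref{lem:stricttransf} then reduces the ideal to $I_Y = (x_{n+1},\, zu - h_1)$. The first generator is pulled back from $T'$, so $\pi(Y)$ lies in the hyperplane $(x_{n+1}=0) \simeq \mathbb{P}^{n-k-1} \subset T'$. The second generator is linear in the fibre coordinate $u$ of the $\mathbb{P}^{k+2}$-fibre with coordinates $(u, x_0, \ldots, x_{k+1})$, hence cuts each fibre to a hyperplane $\mathbb{P}^{k+1}$. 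This exhibits $\pi$ as a $\mathbb{P}^{k+1}$-bundle over $\mathbb{P}^{n-k-1}$, resolving the projection from $\Gamma$ through $\varphi$.

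For part (2), $d \geq 2$, so the strict transform equation depends non-trivially on $(x_{k+2},\ldots,x_{n+1},z)$ and $\pi$ is surjective onto $\mathbb{P}^{n-k}$. Over a general point $(a_{k+2}:\cdots:a_{n+1}:c) \in \mathbb{P}^{n-k}$ with $c \neq 0$, the relation $zu = h_1$ eliminates $u = h_1(x_0,\ldots,x_{k+1})/c$ from the fibre $\mathrm{Proj}\,\mathbb C[u,x_0,\ldots,x_{k+1}]$, cutting it to a $\mathbb{P}^{k+1}$. Substituting into the strict transform equation, where each $F_j'$ is bi-homogeneous of total degree $d-1$ in $(u, x_0, \ldots, x_{k+1})$ after setting $x_j = a_j$ and $z = c$, produces a single homogeneous polynomial of degree $d-1$ in $(x_0,\ldots,x_{k+1})$, so the generic fibre $F$ is a hypersurface of degree $d-1$ in $\mathbb{P}^{k+1}$. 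Adjunction gives $K_F \sim \mathcal{O}_F\bigl((d-1)-(k+2)\bigr) = \mathcal{O}_F(d-k-3)$, and generic smoothness of $\pi$ from the smooth variety $Y$ yields smoothness of~$F$.

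The main obstacle will be keeping the Cox bi-grading straight so that the degree count in part (2) is correct, and in particular verifying that the eliminated variable $u$ contributes the same weight as $x_0,\ldots,x_{k+1}$ to the total degree inside the fibre. A secondary issue is confirming that the $\mathbb{P}^{k+1}$-bundle structure in part (1) is locally trivial rather than merely a $\mathbb{P}^{k+1}$-fibration, but this follows from the fact that the restriction of the projective bundle $T \to T'$ by the relative-hyperplane divisor $zu - h_1$ is again a projective subbundle.
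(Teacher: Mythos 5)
Your proposal is correct and takes essentially the same route as the paper: both restrict the toric fibration of \cref{lem:toric} to $Y$ using the equations of \cref{lem:stricttransf}, cutting each $\mathbb P^{k+2}$-fibre by the relative hyperplane $zu-h_1$ (together with the base hyperplane $x_{n+1}=0$ when $X=\mathbb P^n$), and in part (2) eliminate $u=h_1/c$ to exhibit the generic fibre as a degree $d-1$ hypersurface in $\mathbb P^{k+1}$, concluding by adjunction and generic smoothness. If anything you are more explicit than the paper, which compresses part (1) into the remark that $Y$ lies in $|H-E|$; the only point worth spelling out in your subbundle claim is that $zu-h_1$ restricts to a nonzero linear form on \emph{every} fibre (over base points with $z$-coordinate zero it restricts to $-h_1\not\equiv 0$), so the divisor contains no fibre and is a genuine $\mathbb P^{k+1}$-subbundle.
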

\begin{proof}
     By Lemma \ref{lem:toric}, we know that $\Theta$ is a $\mathbb P^{k+2}$-bundle over $\mathbb{P}^{n-k}$ given by the linear system $|H-E|$. If $X=\mathbb P^n$, then $Y$ is a divisor in $|H-E|$. Then, $\Theta\vert_Y$ restricts to a $\mathbb P^{k+1}$-bundle over $\mathbb P^{n-k-1}$.

Suppose $X\not = \mathbb P^n$. Then, $\Theta\vert_Y \colon Y \rightarrow \mathbb{P}^{n-k}$ is a fibration and the general fibre $F$ of $\theta$ is isomorphic to
$$
F^{''}_{k+2} + \cdots + F^{''}_{n+1} + g_{d-r}=0,
$$
where
    $F^{''}_{i} = F^{'}_{i}(h_r, x_0, \ldots, x_{k+1}, x_{k+2} h_r, \ldots, x_{n+1} h_r)$
That is, a degree $d-r=d-1$ smooth
hypersurface $S$ in $\mathbb P ^{k+1}$ (by generic smoothness, see \cite[Proposition~III.10.7]{Hartshorne}). By adjunction $K_F\sim \mathcal{O}_F(d-1-k-2) = \mathcal{O}_F(d-k-3)$.
\end{proof}

The following lemma characterises the restrictions of $\Theta$ to $Y$ and is the technical core of the paper.

\begin{lemma} \label{lem:restr}
    Let $r>1$. Then $\Theta\vert_Y $ is either
    \begin{enumerate}[label=(\alph*), ref=\alph*]
        \item \label{item:div} a divisorial contraction if and only if $X=\mathbb P^n$ and $\mathrm{codim}_X\Gamma=2$ in which case $\PA \cap Y$ is a divisor contracted to
        $$
        \mathbf{p}_z \in Z\colon (zu-h_r = 0) \subset \mathrm{Proj}\big(\mathbb C[u,x_0,\ldots,x_{n-1},z]\big ) = \mathbb P (1^{n+1},r-1).
        $$
        Moreover, $\mathbf{p}_z$ is the cyclic quotient singularity $$\mathbf{p}_z \sim \frac{1}{r-1}(\underbrace{1,\ldots,1}_{n}).$$ In particular $Z$ is terminal if and only if $-K_{\Gamma}$ is nef. It is canonical but not terminal if and only if $K_{\Gamma} \sim \mathcal{O}_{\Gamma}(1)$.
        \item  \label{item:sqm} a (non-isomorphism) small $\mathbb Q$-factorial modification $Y \dashrightarrow Y'$ if and only if either
        \begin{enumerate}
            \item $r\neq d$ provided that $X \not =\mathbb P^n$ or $\mathrm{codim}_X\Gamma>2$ or
            \item $r=d$ provided that $\Pi \subset X$;
        \end{enumerate}
        where the morphisms $\alpha$ and $\alpha'$ of diagram~\labelcref{eqn:alpha} contract $\PA \cap Y$ and $E \cap \PB \cap Y'$ to a point, respectively.
        Moreover $\mathrm{codim}_{Y'}E\cap  \PB=k+1$.
        \item \label{item:iso} an isomorphism otherwise, that is, when $r=d$ and $X$ does not contain $\Pi$.
    \end{enumerate}
\end{lemma}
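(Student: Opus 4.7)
The plan is to read off the trichotomy case by case from the two intersections $\widetilde{\Pi} \cap Y \subset T$ and $(E \cap \widetilde{\Pi'}) \cap Y' \subset T'$, where $Y' := \overline{\Theta(Y \setminus \widetilde{\Pi})}$. Since $\Theta$ is an isomorphism on $T \setminus \widetilde{\Pi}$ and on $T' \setminus (E \cap \widetilde{\Pi'})$ by \cref{lem:toric}, the restriction $\Theta|_Y$ is determined by the codimensions of these two intersections in $Y$ and $Y'$ respectively. Using \cref{lem:stricttransf}, a direct substitution shows that on $\widetilde{\Pi} = V(x_{k+2}, \ldots, x_{n+1})$ the ideal $I_Y$ reduces to $(z g_{d-r},\, zu - h_r)$, while on $E \cap \widetilde{\Pi'} = V(u, x_0, \ldots, x_{k+1})$ it reduces to $(z \cdot g_{d-r}|_{x_0 = \cdots = x_{k+1} = 0})$: every $F'_j$ vanishes at $u = x_0 = \cdots = x_{k+1} = 0$ because the arguments of $F_j$ are all zero and $\deg F_j = d-1 \geq 1$ whenever $X \neq \mathbb P^n$, and $zu - h_r$ becomes $0$ there.

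For case \ref{item:iso}, $r = d$ and $\Pi \not\subset X$ force $g_0 := g_{d-r}$ to be a nonzero constant, so the residual equation forces $z = 0$; the defining vanishing $z = x_{k+2} = \cdots = x_{n+1} = 0$ on $\widetilde{\Pi}$ and $u = x_0 = \cdots = x_{k+1} = z = 0$ on $E \cap \widetilde{\Pi'}$ lie in the irrelevant ideals $I_T$ and $I_{T'}$ respectively, so both intersections are empty and $\Theta|_Y$ is an isomorphism. For case \ref{item:div}, $X = \mathbb P^n$ and $\mathrm{codim}_X \Gamma = 2$ give $d = 1$, $f' = x_{n+1}$, $g_{d-r} = 0$, and $k = n - 2$; then $\widetilde{\Pi} \cap Y = V(x_n, x_{n+1}, zu - h_r)$ is a divisor of $Y$ of class $[x_n]|_Y = (H - E)|_Y$ (read off from the Cox grading), and it is contracted to a point by $\alpha|_Y$ because $\widetilde{\Pi}$ is. To describe the target, eliminate $x_{n+1}$ in Cox coordinates to identify the relevant slice of $T'$ with $\mathbb P(1^{n+1}, r-1)$ in coordinates $(u, x_0, \ldots, x_{n-1}, z)$, whence $Z = (zu - h_r = 0)$; the contracted point $\mathbf{p}_z = (0 : \cdots : 0 : 1)$ is analysed in the affine chart $z = 1$, where the equation becomes $u = h_r(x_0, \ldots, x_{n-1})$ and eliminates $u$, producing the local model $\mathbb A^n_{x_0, \ldots, x_{n-1}} / \mu_{r-1}$ with all weights equal to $1$. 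The terminality/canonicity statements then follow from Reid's age criterion applied to $\tfrac{1}{r-1}(1, \ldots, 1)$: terminal iff $n \geq r$ (equivalently $-K_\Gamma$ nef) and canonical but not terminal iff $n = r - 1$ (equivalently $K_\Gamma \sim \mathcal O_\Gamma(1)$).

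For case \ref{item:sqm}, I verify codimension $\geq 2$ on both sides. The codimension of $\widetilde{\Pi} \cap Y$ in $Y$ equals $\mathrm{codim}_X \Gamma - 1$ when $g_{d-r} \equiv 0$ and $\mathrm{codim}_X \Gamma$ otherwise; case \ref{item:div} exactly isolates when the former is $1$, while \cref{cor:sing} (excluding $\mathrm{codim}_X \Gamma = 2$ when $r > d$ and $X \neq \mathbb P^n$) and \cref{lem:sing} (forcing $\mathrm{codim}_X \Gamma \geq k + 2 \geq 3$ whenever $\Pi \subset X$) together guarantee codimension $\geq 2$ in every remaining sub-case. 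Dually, when $X \neq \mathbb P^n$ one has either $g_{d-r} \equiv 0$ or $g_{d-r}(0) = 0$, so $f'|_{E \cap \widetilde{\Pi'}} \equiv 0$ and $E \cap \widetilde{\Pi'} \simeq \mathbb P^{n-k-1}$ is contained in $Y'$ with codimension $k + 1 \geq 2$; when $X = \mathbb P^n$ (necessarily with $\mathrm{codim}_X \Gamma > 2$ in case \ref{item:sqm}) the intersection is $V(u, x_0, \ldots, x_{k+1}, x_{n+1})$ of codimension $k + 2$ in $Y'$. Non-emptiness of $\widetilde{\Pi} \cap Y$ follows from any nontrivial zero of $h_r$, so $\Theta|_Y$ is a genuine small $\mathbb Q$-factorial modification. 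Finally, for the hyperquotient description when $E \cap \widetilde{\Pi'} \subseteq Y'$, one works in the chart $U_{z x_i}$ of $T'$: setting $x_i = z = 1$ reduces the $(\mathbb C^*)^2$-action to a residual $\mu_{r-1}$ on the remaining variables with weights $(\underbrace{1, \ldots, 1}_{k+3}, \underbrace{0, \ldots, 0}_{n-k-1})$, and then $zu = h_r$ eliminates the weight-$1$ variable $u$, producing the claimed hypersurface in $\mathbb A^{n+1}_{x_0, \ldots, \widehat{x_i}, \ldots, x_{n+1}} / \mu_{r-1}$ with $k + 2$ weight-$1$ and $n - k - 1$ weight-$0$ coordinates.

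The main obstacle is the Cox-coordinate bookkeeping in case \ref{item:sqm}: carefully tracking the residual torus action after normalising $x_i = z = 1$, identifying where $F'_j$ and $g_{d-r}$ vanish in each sub-case, and confirming that elimination of $u$ via $zu = h_r$ reduces the ambient weight pattern from $(1^{k+3}, 0^{n-k-1})$ to $(1^{k+2}, 0^{n-k-1})$ in the hyperquotient.
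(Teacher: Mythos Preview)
Your proof is correct and follows essentially the same strategy as the paper: restrict the ambient small modification from \cref{lem:toric} to $Y$, compute $\widetilde{\Pi}|_Y \cong V(g_{d-r}) \subset \mathbb P^{k+1}$ via \cref{lem:stricttransf}, use \cref{lem:sing}/\cref{cor:sing} to force $X = \mathbb P^n$ in the divisorial case, and read off the singularities in Cox charts using Reid's criterion; your treatment is in fact more explicit than the paper's on the $E \cap \widetilde{\Pi'}$ side and on the hyperquotient chart computation. One minor slip: non-emptiness of $\widetilde{\Pi} \cap Y$ in case \ref{item:sqm} follows from $g_{d-r}$ being either identically zero or a non-constant homogeneous polynomial (so $V(g_{d-r}) \subset \mathbb P^{k+1}$ is nonempty), not from zeros of $h_r$.
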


\begin{proof}
     By Lemma \ref{lem:toric}, we know that there is an isomorphism in codimension one
         \begin{equation*}
         \begin{tikzcd}
        T \arrow[rr, "\Theta",dashed] \arrow[dr,swap,"\alpha"] && T' \arrow[dl,swap,"\alpha'",swap] \\%
        & Q &
        \end{tikzcd}
        \end{equation*}
        where $\alpha$ contracts the locus $\PA$ and nothing else. By Lemma \ref{lem:stricttransf}, we have
 $$
 \PA \cap Y \colon (zg_{d-r}=zu-h_r=0) \subset T.
 $$
 Since $I_T = (u,x_0,\ldots,x_{k+1}) \cap (z,x_{k+2},\ldots,x_{n+1})$ we can assume $z=1$ at $\PA$. Hence, $\PA \cap Y$ is isomorphic to
 $$
  (g_{d-r}=0) \subset \mathbb{P}^{k+1}.
 $$
 Notice that, a priori, there is no reason for $g_{d-r}$ not to be identically zero.
 Let $\alpha\vert_Y$ be the contraction of $\PA \cap Y$ to a point, which is induced by the linear system $|m(rH-E)|$ for $m$ big enough. Notice that either $\dim \PA \cap Y\geq 1$ or $\PA \cap Y$ is empty. So, if $\alpha\vert_Y$ is a finite map, it follows that it is an isomorphism and $\PA \cap Y$ is empty.

 Suppose that $\alpha\vert_Y$ is not small nor an isomorphism. Then $\alpha\vert_Y$ is divisorial since $rH-E$ is a big divisor on $Y$ for $r>1$. Suppose $\PA \cap Y$ is a divisor in $Y$. Then, in particular, $g_{d-r}$ is identically zero and $k+1=\dim \PA \cap Y =n-1$, that is, $\mathrm{codim}_X\Gamma=2$. By Corollary \ref{cor:sing}, it follows that $X$ is a hyperplane. Consider the projection $ \pi \colon Z \dashrightarrow X$ away from $\mathbf{p}_z$. Then, the map $\pi$ can be resolved by a single blowup to $Y$ with exceptional divisor  $\PA$. Hence, the following diagram commutes
     \[
     \begin{tikzcd}
   & \PA \subset Y \supset E \arrow[dl,"\Theta\vert_Y",swap] \arrow[dr,"\varphi"] &  \\
   \mathbf{p_z} \in  Z\arrow[rr, "\pi", dashrightarrow] & &X \supset \Gamma
    \end{tikzcd}\]
    where $\Theta\vert_Y$ is the resolution of $\mathbf{p}_z$.

    By smoothness of $\Gamma$, the only singular point of $Z$ is $\mathbf{p}_z$. Hence $Z$ is terminal (resp. canonical) if and only if $\mathbf{p}_z$ is. By \cite[Theorem~Pg.~376]{YPG}, it follows that $Z$ is terminal (resp. canonical but not terminal) if and only if $n=k+2>r-1$ (resp. $n=k+2= r-1$). In other words, if and only if  $-K_{\Gamma}$ is nef (resp. $K_{\Gamma} \sim \mathcal{O}_{\Gamma}(1)$).

Suppose that $\alpha\vert_Y$ is not divisorial. Then, for the same reason as before, $\alpha\vert_Y$ is small, possibly an isomorphism. Suppose it is not an isomorphism. Then, the small modification $\alpha\vert_Y$ contracts $\PA \cap Y$ to a point. In particular $\PA \cap Y$ is non-empty which happens precisely when $g_{d-r} \not \in \mathbb C ^*$. In other words, when the following implication holds: If $r = d$ then $g_{d-r}$ is identically zero. Equivalently, $r=d$ implies $ \Pi \subset X$.
\end{proof}

\begin{lemma} \label{lem:rd}
    Let $\mathrm{deg} \Gamma = \mathrm{deg} X >1$. Then there is a projection $\pi \colon X \dashrightarrow \mathbb P^{n-k-1}$ that decomposes as
        \[
    \begin{tikzcd} Y \arrow[r,"\Theta\vert_Y",dashrightarrow]  \arrow[d,"\varphi",swap] &  Y' \arrow[d,"\psi"]\\  X  \arrow[r,dashrightarrow] &\mathbb P^{n-k-1}
    \end{tikzcd}
    \]
where $\Theta\vert_Y$ is a (non-isomorphism) small $\mathbb Q$-factorial modification if and only if $\Pi \subset X$ and an isomorphism otherwise.

In each case the general fibre of $\psi$ is a complete intersection $F$ such that $K_F \sim \mathcal{O}(d-k-3)$.
\end{lemma}

\begin{proof}
    Consider the flat projective family
    $ \eta \colon \mathcal{X} \rightarrow \mathbb C$ whose general fibre is the smooth degree $d$ hypersurface
    $$
    \mathcal{X}_{t} \colon (x_{k+1}F_{k+1}+\cdots+x_{n+1}F_{n+1}+t\cdot h_d=0) \subset \mathbb P^{n+1}
    $$
    containing $\Gamma \colon (h_d=0) \subset \Pi$, where $F_i$ and $h_d$ are as in \cref{set:hyp}. Notice that each fibre of $\eta$ is isomorphic to some $X$ as in the conditions of the lemma. Namely, the general fibre of $\eta$, $\mathcal{X}_t$, does not contain $\Pi$ while the central fibre, $\mathcal{X}_0$, does.
    In the following we study the birational geometry of the fibres of $\eta$.

   Let $\varphi_t \colon \mathcal{Y}_t \longrightarrow \mathcal{X}_t$ be the blowup of $\mathcal{X}_t$ along $\Gamma$ and similarly, let $\varphi_0\colon \mathcal{Y}_0 \longrightarrow \mathcal{X}_0$ be the blowup of $\mathcal{X}_0$ along~$\Gamma$. By Lemma \ref{lem:restr}, we know that $\Theta\vert_{\mathcal{Y}_t}\colon \mathcal{Y}_t \rightarrow \mathcal{Y}'_t$, the restriction of $\Theta$ to the general fibre $\mathcal{Y}_t$, is a small modification, possibly an isomorphism.
    Then, by Lemma \ref{lem:stricttransf}, $\mathcal{Y}_t$ is given by
    \begin{align*}
        x_{k+2}F^{'}_{k+2}+\cdots +x_{n+1}F^{'}_{n+1} + t\cdot z&=0 \\
        zu-h_d&=0
    \end{align*}
    inside~$T$. Note that $\mathcal Y_t$ is isomorphic to the smooth hypersurface
    $$
    u(x_{k+2}F^{'}_{k+2}+\cdots +x_{n+1}F^{'}_{n+1}) + t\cdot  h_d=0
    $$
    inside $\mathrm{Bl}_{\Pi} \mathbb P^{n+1}$. By Lemma \ref{lem:restr}\labelcref{item:iso},
    $\Theta\vert_{\mathcal{Y}_t}$ is an isomorphism. Let $\Psi$ be as in Lemma \ref{lem:toric}. By Lemma \ref{lem:toric}, it follows that $\psi_t:=\Psi\vert_{\mathcal{Y}'_{t}}$ is a fibration to $\mathbb P ^{n-k-1}$ whose general fibre is isomorphic to the smooth hypersurface  $F$
   $$
    u(F^{'}_{k+2}+\cdots +F^{'}_{n+1}) + t\cdot  h_d=0
    $$
    of degree $d$ in $\mathbb P ^{k+2}$. Hence, $K_F \sim \mathcal{O}_F(d-k-3)$. Hence, the general fibre of $\eta$ fits into the diagram

    \[
    \begin{tikzcd} \mathcal{Y}_t\arrow[r,equal]  \arrow[d,"\varphi_t",swap] &  \mathcal{Y}'_t \arrow[d,"\psi_t"]\\  \mathcal{X}_t\arrow[r,dashrightarrow] &\mathbb P^{n-k-1}
    \end{tikzcd}
    \]

On the other hand, the central fibre of the family $\eta$, $\mathcal{X}_0$, contains the $(k+1)$-dimensional linear subspace $ \Pi \colon ( x_{k+2}= \ldots = x_{n+1}=0) \supset \Gamma$ and so $\mathcal{Y}_0$ admits a small $\mathbb Q$-factorial modification $\Theta_0 \colon \mathcal{Y}_{0} \dashrightarrow \mathcal{Y}'_{0}$ by Lemma \ref{lem:restr}. By Lemma \ref{lem:toric}, it follows that $\Psi$ is a fibration to $\mathbb P^{n-k-1}$ whose general fibre is isomorphic to the complete intersection
 \begin{align*}
        F^{'}_{k+2}+\cdots +F^{'}_{n+1} &=0 \\
        zu-h_d&=0
    \end{align*}
of degrees $d-1$ and $d$, respectively, inside $\mathbb P (1^{k+3},d-1)$. Moreover it is clear that $\mathcal{Y}_0$ is indeed the central fibre of the family $\mathcal{Y}_t \rightarrow \mathbb C$. In particular, we have a degeneration of the corresponding movable cones where
\begin{align*}
\mathcal{Y}_t  & \rightsquigarrow \mathcal{Y}_0\\
\mathrm{Mov}(\mathcal{Y}_t)=\mathrm{Nef}(\mathcal{Y}_t) & \rightsquigarrow \mathrm{Mov}(\mathcal{Y}_0)=\mathrm{Nef}(\mathcal{Y}_0) \cup \mathrm{Nef}(\mathcal{Y}'_0)
\end{align*}
\end{proof}

\begin{example} \label{ex:jump}
In \cite{defI,defII}, Wi\'sniewski showed that the nef cone of a smooth Fano variety remains constant under deformations in smooth families of Fano varieties.
We give an example where the above fails if we allow the central fibre to be weak Fano. See also \cite{jump}.

Let $X_0 \in |\mathcal{O}_{\mathbb P^5}(2)|$ be a smooth quadric fourfold containing $\Pi\simeq \mathbb P^2$. Let $Y_0$ be the blowup of $X_0$ along a smooth conic in $\Pi$, that we denote by $\Gamma$. Then $Y_0$ is a \emph{weak} Fano variety and its movable cone is
$$
\mathrm{Mov}(Y_0)=\mathbb{R}_+[H]\oplus \mathbb{R}_+[H-E]=\mathrm{Nef}(Y_0) \cup \mathrm{Nef}(Y_0')
$$
where  $Y_0'$ is the only small $\mathbb Q$-factorial modification of $Y_0$. See Lemma \ref{lem:rd}. We now consider a flat deformation $\eta \colon X \rightarrow \mathbb C$ of $X_0$ where each fibre $X_t := \eta^{-1}(t),\, t\not = 0$, is a smooth quadric fourfold containing $\Gamma$ but \emph{not} $\Pi$. Let $Y_t$ be the blowup of $X_t$ along $\Gamma$. Then $Y_t$ is a smooth Fano variety which is a flat deformation of $Y_0$ and its Movable cone agrees with the Movable cone of $Y_0$, see Lemma \ref{lem:rd}. However it contains only one Mori chamber,
$$
\mathrm{Nef}(Y_t)=\mathrm{Mov}(Y_t)
$$
and so the Nef cone of $Y_0$ is a proper subcone of the Nef cone of $Y_t$, for $t \not = 0$.
\end{example}

\begin{lemma} \label{lem:rdiffd}
    Suppose $r>1$. Assume $\deg X \not = \deg \Gamma$ and $X \not = \mathbb P^n$ or $\mathrm{codim}_X\Gamma>2$. Let $\pi \colon X \dashrightarrow \mathbb P^{n-k-1}$ be the projection away from $\Pi$. Then $\pi$ can be decomposed as
       \[
    \begin{tikzcd} Y\arrow[r,"\Theta\vert_Y",dashrightarrow]  \arrow[d,"\varphi",swap] &  Y' \arrow[d,"\psi"]\\  X\arrow[r, "\pi",dashrightarrow] &\mathbb P^{n-k-1}
    \end{tikzcd}
    \]
   where $\psi$ is a fibration onto $\mathbb P^{n-k-1}$ whose general fibre $F$ is isomorphic to a smooth complete intersection in $\mathbb P(1^{k+3},r-1)$ such that $K_F\sim \mathcal{O}_F(d-k-3)$.
\end{lemma}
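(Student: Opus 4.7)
The plan is to adapt the argument of \cref{lem:rd} to the setting $r \neq d$, which is simpler because no deformation of $X$ is required: the behaviour of $\Theta|_Y$ is uniform. The hypotheses place us in the small $\mathbb{Q}$-factorial modification case of \cref{lem:restr}, so $\Theta|_Y \colon Y \dashrightarrow Y'$ is an SQM (possibly the identity). The fibration $\Phi' \colon T' \to \mathbb{P}^{n-k-1}$ from \cref{lem:toric} has fibres $\mathbb{P}(1^{k+3}, r-1)$, and setting $\psi := \Phi'|_{Y'}$ produces the commutative diagram in the statement.

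Next I would compute a generic fibre of $\psi$. By \cref{lem:stricttransf}, $Y'$ is cut out in $T'$ by the two equations
\[
x_{k+2}F'_{k+2}+\cdots +x_{n+1}F'_{n+1} + zg_{d-r} = 0 \qquad \text{and} \qquad zu - h_r = 0.
\]
Fixing a generic $(x_{k+2}:\cdots:x_{n+1}) \in \mathbb{P}^{n-k-1}$ makes these into a complete intersection of weighted degrees $d-1$ and $r$ in the fibre $\mathbb{P}(1^{k+3}, r-1)$. Following the proof of \cref{lem:rd}, I multiply the first equation by $u$ and use the second to substitute $uz = h_r$, eliminating $z$ and presenting the fibre as the degree-$d$ hypersurface
\[
F\colon u\bigl(x_{k+2}F''_{k+2}+\cdots +x_{n+1}F''_{n+1}\bigr) + h_r g_{d-r} = 0
\]
in $\mathbb{P}^{k+2}$ with coordinates $(u, x_0, \ldots, x_{k+1})$, where $F''_j$ denotes $F_j$ after substituting the fixed scalars for $x_{k+2}, \ldots, x_{n+1}$. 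Adjunction in $\mathbb{P}^{k+2}$ gives $K_F \sim \mathcal{O}_F(d-k-3)$, and smoothness of $F$ for generic $(x_{k+2}:\cdots:x_{n+1})$ follows from Bertini's theorem applied to the resulting linear system of degree-$d$ hypersurfaces. Finally, $\pi$ is identified as the linear projection of $X$ from $\Pi$, resolved precisely by $\varphi$ followed by $\psi \circ \Theta|_Y$.

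I anticipate the main technical subtlety is ensuring the $z$-elimination genuinely exhibits an isomorphism between the fibre of $\psi$ and the hypersurface $F$, rather than merely a birational map. In \cref{lem:rd}, where $g_{d-r}$ is a nonzero constant, this is automatic. For $r < d$ the polynomial $g_{d-r}$ may have positive degree, and the elimination could fail along $\{u = g_{d-r} = h_r = 0\}$; a codimension count in the fibre shows this locus is empty for sufficiently generic $(x_{k+2}:\cdots:x_{n+1})$. The case $r > d$ (where $\Pi \subset X$ and $g_{d-r} \equiv 0$) requires a parallel argument in which the first equation no longer involves $z$: one works directly with the complete intersection description in $\mathbb{P}(1^{k+3}, r-1)$ and recovers the same canonical class formula $K_F \sim \mathcal{O}_F(d-k-3)$ by adjunction.
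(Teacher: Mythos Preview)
Your overall strategy—invoking \cref{lem:restr} to identify $\Theta|_Y$ as a small $\mathbb Q$-factorial modification and then restricting the fibration $\Phi'$ of \cref{lem:toric}—is correct and matches the paper. The gap is in your description of the generic fibre.

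The elimination of $z$ does not produce an isomorphism with a degree-$d$ hypersurface in~$\mathbb P^{k+2}$. In the fibre $\mathbb P(1^{k+3},r-1)$ with coordinates $(u,x_0,\ldots,x_{k+1},z)$, the complete intersection
\[
\textstyle\sum_j c_j F'_j + z\,g_{d-r}=0,\qquad zu-h_r=0
\]
always contains the vertex $[0:\cdots:0:1]$ once $r\neq d$: every monomial in both equations then has positive degree in $u,x_0,\ldots,x_{k+1}$. This vertex is precisely the point of $E\cap\widetilde{\Pi'}$ lying in that fibre, and the projection to $\mathbb P^{k+2}$ is undefined there, so you obtain at best a birational map. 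Your codimension argument cannot rescue this, because the obstruction does not depend on the base point~$c$. Worse, for $r<d$ and $k\ge 2$ your candidate hypersurface $u\bigl(\sum_j c_j F'_j\bigr)+h_r g_{d-r}=0$ is singular along the nonempty locus $\{u=h_r=g_{d-r}=\sum_j c_j F'_j|_{u=0}=0\}$ (all partials vanish there), so it cannot be isomorphic to the smooth fibre. The analogous elimination succeeds in \cref{lem:rd} only because there $g_{d-r}=t\in\mathbb C^*$, so the vertex fails the first equation and one may solve for $z$ globally from it. Note also that your $F''_j$ should be $F_j$ evaluated at $(x_0,\ldots,x_{k+1},uc_{k+2},\ldots,uc_{n+1})$, not at $(x_0,\ldots,x_{k+1},c_{k+2},\ldots,c_{n+1})$, since it is $F'_j$ that appears in the equations of~$Y$.

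The paper simply does not eliminate~$z$: it presents the generic fibre as the complete intersection of degrees $d-1$ and $r$ in $\mathbb P(1^{k+3},r-1)$ and applies adjunction there to obtain
\[
K_F\sim\mathcal O_F\bigl((d-1)+r-(k+3)-(r-1)\bigr)=\mathcal O_F(d-k-3).
\]
The word ``hypersurface'' in the statement should be read in this looser sense; the fibre is not a hypersurface in~$\mathbb P^{k+2}$. Your handling of the case $r>d$ already follows this route—do the same for $r<d$.
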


\begin{proof}

By Lemma \ref{lem:restr}, it follows that $\Theta\vert_Y \colon Y \dashrightarrow Y'$ is a small $\mathbb Q$-factorial modification and that the contracted locus is $\PA \cap Y$. On the other hand it is clear that the ideal of $Y$ is contained in the ideal of $E\cap \PB$. Hence, $\alpha'$ contracts $E\cap \PB \simeq \mathbb P^{\mathrm{codim_X \Gamma}-1}$.
Notice that $\dim E\cap \PB<n-1$ since $\dim\Gamma>0$.

Finally, we restrict the $\mathbb{P}(1^{k+3},r-1)$-bundle over $\mathbb P^{n-k-1}$ of Lemma \ref{lem:toric} to $Y$. This is a fibration whose general fibre $F$ is isomorphic to the smooth
(by generic smoothness, see \cite[Theorem~III.10.2 and Lemma~III.10.5]{Hartshorne})
codimension $2$ complete intersection
\begin{align*}
    F^{'}_{k+2}+\cdots +F^{'}_{n+1} + zg_{d-r}&=0 \\
    zu-h_r&=0
\end{align*}
of hypersurfaces of degrees $d-1$ and $r$, respectively, where the polynomials $F_{i}^{'}$ are as in \cref{lem:stricttransf} and where $x_{k+2}, \ldots, x_n$ are general complex numbers.
By adjunction $K_F\sim \mathcal{O}_F(r+d-1-(k+3)-(r-1)) = \mathcal{O}_F(d-k-3) $ and the first part of the claim follows.
\end{proof}

\begin{example}[Smooth Calabi-Yau fibrations] \label{exa:Smooth Calabi-Yau fibrations}
The following example gives a particular construction of Calabi-Yau fibrations of relative Picard rank $1$. Let $X \in |\mathcal{O}_{\mathbb P^{n+1}}(k+3)|$ be a smooth Fano hypersurface. Let $\Gamma \subset \Pi$ be a smooth hypersurface such that $K_{\Gamma} \sim \mathcal{O}_{\mathbb P^{k+1}}(1)$. Suppose $\Gamma \subset X$ and $\Pi \not \subset X$. Then, by Lemma \ref{lem:rd}, $Y$ the blowup of $X$ along $\Gamma$ admits a fibration to $\mathbb P^{n-k-1}$ such that the general fibre $F$ satisfies $K_F \sim 0$. For instance the blowup of a smooth quartic threefold hypersurface along a smooth planar quartic curve has the structure of a fibration into $K3$ surfaces over $\mathbb P^1$.
\end{example}

\begin{theorem} \label{thm:main}
    \MainCones{}
\end{theorem}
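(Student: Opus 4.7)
The plan is to exploit the fact that $\mathrm{Pic}(Y) \cong \mathbb{Z}^2$ is generated by $H$ and $E$ (by the Lefschetz hyperplane theorem applied to $X$ together with the blowup structure along the smooth centre $\Gamma$), so each of the three cones has exactly two extremal rays. I would identify effective or semi-ample representatives on one side and use explicit contracted curves on the other, then assemble the Mori chamber decomposition from \cref{lem:r1,lem:restr,lem:rd,lem:rdiffd} to conclude that $Y$ is an MDS with the stated Cox ring.

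For $\mathrm{Eff}(Y)$, the divisor $E$ is exceptional and $H - E$ is the class of the strict transform of a hyperplane section of $X$ containing $\Gamma$ (for instance a component of $\Pi \cap X$). For the reverse inclusion, I use that $\mathrm{Pic}(X) = \mathbb{Z} \cdot H$ to show that any prime divisor on $Y$ distinct from $E$ has class $\alpha H - m E$ with $0 \leq m \leq \alpha$; the key inequality $m \leq \alpha$ holds because the ideal of $\Gamma$ in $\mathbb{P}^{n+1}$ is generated in positive degree, so a polynomial of total degree $\alpha$ can vanish along $\Gamma$ only to order at most $\alpha$. A non-negative combination of such classes together with multiples of $E$ then lies in $\mathbb{R}_+[E] + \mathbb{R}_+[H - E]$.

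For $\mathrm{Nef}(Y)$, the ray $\mathbb{R}_+[H]$ is always nef as the pullback of an ample class, and the analysis of the second ray follows the case split in the statement. In the first case ($r \neq d$ or $\Pi \subset X$), $rH - E$ is semi-ample: when $r = 1$ this comes directly from \cref{lem:r1}, while for $r > 1$ the contraction or SQM of \cref{lem:restr} is given by $|m(rH - E)|$. The locus $\widetilde{\Pi}|_Y$ is non-empty in this case and contracted by the corresponding map, while its $\varphi$-image in $\Pi \cap X$ remains positive-dimensional; hence any curve $C \subset \widetilde{\Pi}|_Y$ satisfies $(rH - E) \cdot C = 0$ with $H \cdot C > 0$, pinning $rH - E$ as the extremal ray. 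In the second case ($r = d$ and $\Pi \not\subset X$), \cref{lem:restr} says $\Theta|_Y$ is an isomorphism, and composing with the toric fibration $\Phi'$ from \cref{lem:toric} gives the fibration $Y \to \mathbb{P}^{n-k-1}$ of \cref{lem:rd}, exhibiting $H - E$ as semi-ample and pinned as extremal by a general fibre. For $\mathrm{Mov}(Y)$: if $X = \mathbb{P}^n$ and $\operatorname{codim}_X\Gamma = 2$, then $\alpha|_Y$ is divisorial by \cref{lem:restr}, so no further $\mathbb{Q}$-factorial model exists and $\mathrm{Mov}(Y) = \mathrm{Nef}(Y)$; in all other configurations, either $\Theta|_Y$ is a non-trivial SQM so that $\mathrm{Mov}(Y) = \mathrm{Nef}(Y) \cup \Theta^{*} \mathrm{Nef}(Y') = \mathbb{R}_+[H] + \mathbb{R}_+[H - E]$ via \cref{lem:toric}, or $\Theta|_Y$ is an isomorphism (or, when $r = 1$, already a fibration) and the same formula already holds.

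The MDS property now follows: the chamber decomposition is finite with at most two chambers, each a projective $\mathbb{Q}$-factorial variety, and the Cox ring equals $\mathrm{Cox}(T)/I_Y = \mathbb{C}[u, x_0, \ldots, x_{n+1}, z]/I_Y$ because $Y$ is a complete intersection in the toric ambient $T$ cut out by the ideal of \cref{lem:stricttransf}. The fibration trichotomy is read off from \cref{lem:r1,lem:rd,lem:rdiffd}: the map induced by $\varphi$ to the opposite chamber is a fibration whose generic fibre $F$ satisfies $K_F \sim \mathcal{O}_F(d - k - 3)$, and the sign of $3 - \deg X + \dim \Gamma = -(d - k - 3)$ determines whether the fibres are Fano, Calabi--Yau, or canonically polarised. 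The main obstacle I expect is careful case bookkeeping: specifically, making sure the boundary case $r = d$, $\Pi \not\subset X$ of the nef cone (where the second extremal ray is $H - E$ rather than $rH - E$) is correctly distinguished from the superficially similar case $r = d$, $\Pi \subset X$, and that each sub-case of \cref{lem:restr} matches the corresponding clause of the theorem.
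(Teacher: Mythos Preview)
Your proposal is correct and takes essentially the same approach as the paper: the paper's proof is a single sentence citing \cref{lem:r1,lem:restr,lem:rd,lem:rdiffd}, and you have simply unpacked how the cone data and the fibration trichotomy are read off from those lemmas. Your additional detail (the explicit extremal-ray arguments for $\mathrm{Eff}(Y)$ and $\mathrm{Nef}(Y)$, and the Cox-ring identification via the complete-intersection presentation of $Y$ in $T$) is compatible with and slightly more explicit than what the paper records.
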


\begin{proof}
     The statement follows from Lemmas \ref{lem:r1},  \ref{lem:restr}, \ref{lem:rd}, \ref{lem:rdiffd} where the birational models have been explicitly computed.
\end{proof}

\begin{remark}
\begin{enumerate}[label=(\alph*), ref=\alph*]
 \item \Cref{thm:main} gives an alternative proof that $\Gamma$ is not a maximal centre when $X$ is a Fano variety of Fano index~$1$. See \cite{KollarRigidity10names} for a historical account of superrigidity of smooth Fano hypersurfaces of Fano index 1.
\item It is also possible to show that some of the varieties $Y$ in \cref{thm:main} are Mori dream spaces using arguments in \cite{Ito}.
\end{enumerate}
\end{remark}

\begin{example}
Let $X \in |\mathcal{O}_{\mathbb P^{5}}(d)|$ be a smooth 4-fold hypersurface of degree $d$ containing the plane $\Pi$.  Then,  the projection $X \dashrightarrow  \Pi'$ can be decomposed as the blowup of $X$ along a smooth conic in $\Pi$, followed by a small modification swapping two projective planes, followed by a fibration onto $\Pi$. See Lemmas \ref{lem:r1} \ref{lem:restr}, \ref{lem:rd}, \ref{lem:rdiffd}. The small modification is a flip if $X=\mathbb P^4$, a flop if $X$ is a quadric and an anti-flip otherwise, which follows from the description of the Nef cone in each instance. See Theorem \ref{thm:main}. In this case, smoothness of the blowup is preserved when we have a flop or flip but the anti-flip introduces hypersurface singularities along the flopped plane. See \cref{lem:restr}\ref{item:sqm}.  Moreover $-K_F \sim \mathcal{O}_F(4-d)$. Hence, $X$ is birational to a Fano fibration whenever $1 \leq d \leq 3$, a Calabi-Yau fibration if $d=4$ and a fibration into canonically polarised varieties when $d>4$.
       \end{example}

We classify for which $\Gamma$ and $X$ we have that the blowup of $X$ along $\Gamma$ is a smooth Fano variety.

\begin{theorem} \label{thm:mainII}
    \MainFano{}
\end{theorem}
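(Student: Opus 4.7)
The plan is to compute $-K_Y$ in the basis $\{H, E\}$ and read off when it lies in the interior of $\Nef(Y)$, as described in \cref{thm:main}. Since $X$ is smooth and $\Gamma$ is a smooth subvariety, $Y$ is smooth, so $Y$ is a Fano variety if and only if $-K_Y$ is ample. The blowup formula gives
\[
-K_Y \;=\; \varphi^{*}(-K_X) - (\operatorname{codim}_X \Gamma - 1)\,E \;=\; (n+2-d)\,H - (n-k-1)\,E.
\]
Since $\Nef(Y)$ is spanned by two extremal rays, ampleness is equivalent to $-K_Y$ having strictly positive coordinates in the extremal basis.

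The next step is to split into the two cases of \cref{thm:main}. When $\deg X = \deg \Gamma$ and $\Pi \not\subset X$, we have $\Nef(Y) = \mathbb{R}_+[H] + \mathbb{R}_+[H - E]$; writing $-K_Y = aH + b(H-E)$ yields $b = n-k-1$ (positive since $\dim \Pi \leq n-1$ in the setting) and $a = k+3-r$, so $a > 0$ with $a \in \mathbb{Z}$ is precisely $\deg \Gamma \leq 2 + \dim \Gamma$, giving condition~(1). Otherwise $\Nef(Y) = \mathbb{R}_+[H] + \mathbb{R}_+[rH - E]$, and writing $-K_Y = aH + b(rH-E)$ yields $b = n-k-1$ and $a = n+2-d - r(n-k-1)$; strict positivity of $a$ with integrality gives condition~(2.2), while the dichotomy between the two nef cones is condition~(2.1). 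Condition~(2.3) is not an ampleness constraint at all but follows from \cref{lem:sing} applied to the smooth $X$ containing $\Pi$, using $\dim \Pi = \dim \Gamma + 1$ and $\operatorname{codim}_{\mathbb P^{n+1}} \Pi = \operatorname{codim}_X \Gamma$.

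Finally, the ``in particular'' assertion amounts to $r \leq k+2$, since $-K_\Gamma \sim \mathcal{O}_\Gamma(k+2-r)$. In case~(1) this is exactly the stated bound. In case~(2), combining (2.2) with $d \geq 1$ yields $r(n-k-1) \leq n$, and the elementary inequality $n/(n-k-1) \leq k+2$ is equivalent to $n \geq k+2$, which is part of the setting; hence $r \leq k+2$. The main difficulty is purely bookkeeping: matching the two nef-cone cases to the two cases of the statement without overlap, and recognising (2.3) as automatic from smoothness rather than an independent hypothesis.
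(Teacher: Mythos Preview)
Your argument is correct and follows the same route as the paper: compute $-K_Y=(n+2-d)H-(n-k-1)E$ and test when it lies in the interior of the nef cone given by \cref{thm:main}, splitting according to whether $\Nef(Y)$ is generated by $H,\,H-E$ or by $H,\,rH-E$. Your treatment is in fact slightly more complete than the paper's: you explicitly note that condition~(2.3) is not an extra ampleness constraint but is automatic from smoothness of $X$ via \cref{lem:sing}\labelcref{itm:smooth implies ineq}, a point the paper's proof passes over in silence. For the ``in particular'' clause the paper bounds $d\le n+1-r(n-k-1)\le 3-r+k$ directly (using $n\ge k+2$), whereas you first deduce $r(n-k-1)\le n$ and then use $n/(n-k-1)\le k+2$; the two arguments are equivalent rearrangements of the same inequality.
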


\begin{proof}
    The statement follows from \cref{thm:main} and $-K_Y \sim (n+2-d)H-(n-k-1)E$.
\end{proof}

We illustrate how Theorem \ref{thm:mainII} can be made more explicit in particular cases.

\begin{corollary}
 \label{ex:curvefano}
    Let $\Gamma$ be a smooth projective planar curve and let $\Pi$ be the plane containing it. Let $\varphi \colon Y \rightarrow X$ be the blowup of $X$ along $\Gamma$. Then $Y$ is a Fano variety if and only if one of the following holds:
    \begin{itemize}
        \item $ \Gamma $ is a line and $X$ is a linear, quadric or cubic hypersurface of dimension $n\geq 3$;
        \item $\Gamma$ is a conic and  $X$ is a
        \begin{itemize}
            \item $3$ or $4$-dimensional hyperplane or
            \item quadric hypersurface of dimension $n\geq 3$ such that $\Pi \not \subset X$; or
        \end{itemize}
        \item $\Gamma$ is a cubic and $X$ is a
        \begin{itemize}
            \item $3$-dimensional hyperplane or
            \item a cubic hypersurface of dimension $n\geq 3$ such that $\Pi \not \subset X$.
        \end{itemize}

    \end{itemize}
\end{corollary}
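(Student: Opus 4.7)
The plan is to specialise Theorem \ref{thm:mainII} to $k = \dim\Gamma = 1$, where $\Pi \simeq \mathbb{P}^2$ contains the planar curve $\Gamma$, and then enumerate the integer solutions of the resulting inequalities. Throughout, write $n = \dim X \geq 3$, $d = \deg X$, and $r = \deg\Gamma$. Setting $k = 1$ gives $\mathrm{codim}_X\Gamma = n-1$, so condition (2.2) of Theorem \ref{thm:mainII} reads
\[
d \leq n + 1 - r(n-2),
\]
and condition (2.3) reads ``$n \geq 4$ whenever $\Pi \subset X$''.

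The first step is to rule out $r \geq 4$. Case (1) of Theorem \ref{thm:mainII} requires $r \leq \dim\Gamma + 2 = 3$, and for Case (2) the displayed bound combined with $r \geq 4$ and $n \geq 3$ yields $d \leq 9 - 3n \leq 0$, contradicting $d \geq 1$. Hence only $r \in \{1,2,3\}$ survive.

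The bulk of the argument is a three-way case analysis. Case (1) contributes precisely the equal-degree configurations $d = r$ with $\Pi \not\subset X$ and arbitrary $n \geq 3$, accounting for a line in a hyperplane (with $\Pi \not\subset X$), a conic in a quadric hypersurface, and a plane cubic in a cubic hypersurface. For Case (2), I substitute into the displayed bound: $r = 1$ forces $d \in \{1,2,3\}$ with $n \geq 3$ unrestricted; $r = 2$ forces $d \leq 5 - n$, giving $(d,n) \in \{(1,3),(1,4)\}$ once the $d = r$ configurations already handled by Case (1) are excluded; $r = 3$ forces $d \leq 7 - 2n$, giving only $(d,n) = (1,3)$.

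Finally, I would assemble these lists into the three bullets of the statement and check consistency with the $\Pi \subset X$ dichotomy. This compatibility is automatic: by Lemma \ref{lem:sing}\labelcref{itm:smooth implies ineq}, a smooth $X$ can contain the $2$-plane $\Pi$ only if $n \geq 4$, which matches condition (2.3); in particular, the restrictions ``$\Pi \not\subset X$'' appearing in the conic and cubic bullets for $n = 3$ are automatic from smoothness. I do not anticipate any substantive obstacle; the proof is a bookkeeping exercise enumerating the integer solutions of the inequalities supplied by Theorem \ref{thm:mainII}.
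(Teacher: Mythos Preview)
Your proposal is correct and follows the same approach as the paper, which simply refers to the analogous enumeration in the proof of Corollary~\ref{ex:surfacefano}: specialise Theorem~\ref{thm:mainII} to the given $k$, bound $r$ via nefness of $-K_\Gamma$, and list the integer solutions of the resulting inequalities. One minor caveat: Lemma~\ref{lem:sing}\labelcref{itm:smooth implies ineq} does not apply when $X$ is a hyperplane (a hyperplane can contain a $2$-plane in any dimension $n\geq 2$), so---as in the paper's proof of Corollary~\ref{ex:surfacefano}---your smoothness argument ruling out $\Pi\subset X$ for $n=3$ should be restricted to $d\geq 2$; this is harmless for the line bullet since it carries no $\Pi$-condition.
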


\begin{proof}
    See the proof of Corollary \ref{ex:surfacefano}.
\end{proof}

\begin{corollary} \label{ex:surfacefano}
    Let $\Gamma$ be a smooth projective surface contained in $\Pi\simeq \mathbb P^3$. Let $\varphi \colon Y \rightarrow X$ be the blowup of $X$ along $\Gamma$. Then $Y$ is a Fano variety if and only if one of the following holds:
    \begin{itemize}
        \item $ \Gamma $ is a plane and $X$ is a hypersurface of degree at most four and dimension $n\geq 4$;
        \item $\Gamma$ is a quadric surface and  $X$ is a
        \begin{enumerate}
            \item hyperplane of dimension $4 \leq n\leq 6$ or
            \item quadric hypersurface of dimension  $n \geq 4$ such that $\Pi \not \subset X$ or
            \item $4$-dimensional cubic hypersurface;
        \end{enumerate}
        \item $\Gamma$ is a cubic surface and $X$ is a
        \begin{enumerate}
            \item hyperplane of dimension $4$ or
            \item a cubic hypersurface of dimension $n\geq 4$ such that $\Pi \not \subset X$; or
        \end{enumerate}
        \item $\Gamma$ is a quartic surface and $X$ is a
        \begin{enumerate}
            \item hyperplane of dimension $4$ or
            \item a quartic hypersurface of dimension $n\geq 4$ such that $\Pi \not \subset X$.
        \end{enumerate}
   \end{itemize}
\end{corollary}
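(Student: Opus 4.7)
The plan is to specialise Theorem~\ref{thm:mainII} to $\dim\Gamma = 2$, that is $k=2$. Because the ``in particular'' clause of that theorem forces $-K_\Gamma$ to be nef whenever $Y$ is Fano, the degree $r := \deg\Gamma$ satisfies $r \leq k+2 = 4$, so $\Gamma$ must be one of the plane, quadric, cubic, or quartic surfaces appearing in the four bullets. The setting also imposes $\dim\Pi = 3 \leq n-1$, hence $n \geq 4$.

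For each $r$ I would check the two cases of \cref{thm:mainII}. Case~(1) contributes precisely when $d = r$ and $\Pi \not\subset X$, with no further restriction beyond $n \geq 4$. In Case~(2), the numerical condition~(2.2) becomes $d \leq n+1 - r(n-3)$, and the smoothness condition~(2.3)---which via \cref{lem:sing} is only binding when $\Pi \subset X$ and $d \geq 2$---reads $n \geq 2k+2 = 6$. A key input is that whenever $r > d$ the polynomial $g_{d-r}$ does not exist, so $f_d \in (x_{k+2}, \ldots, x_{n+1})$ and $\Pi \subset X$ is automatic; combined with the smoothness constraint, this immediately rules out every configuration with $r > d$ and $d \geq 2$.

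Plugging in each value of $r$ is then routine bookkeeping. For $r=1$ the inequality~(2.2) reduces to $d \leq 4$, and every $d \in \{1,2,3,4\}$ with $n \geq 4$ yields a Fano $Y$. For $r=2$ the inequality $d \leq 7-n$ singles out the hyperplane case with $4 \leq n \leq 6$ (with no smoothness issue since $d=1$), the quadric case with $n \geq 4$ and $\Pi \not\subset X$ via Case~(1), and the cubic fourfold. For $r=3$ only the hyperplane in dimension four and the Case~(1) cubics survive, and for $r=4$ only the hyperplane in dimension four and the Case~(1) quartics, with the other values of $d$ being eliminated by (2.2) combined with the $r > d$ smoothness obstruction. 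This reproduces the four bullets, and the parallel enumeration with $k=1$ and $r \leq 3$ proves \cref{ex:curvefano}. The only delicate point is the systematic elimination of the $r > d$ subcases with $d \geq 2$, where the obstruction is the smoothness of $X$ via \cref{lem:sing,cor:sing} rather than the ampleness of $-K_Y$.
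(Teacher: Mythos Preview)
Your proposal is correct and follows essentially the same approach as the paper's proof: both specialise \cref{thm:mainII} to $k=2$, use nefness of $-K_\Gamma$ to bound $r\leq 4$, and then eliminate the spurious numerical solutions via the smoothness constraint of \cref{lem:sing}. The paper organises the bookkeeping slightly differently---it first lists all triples $(r,d,n)$ solving $4\leq n\leq \frac{1+3r-d}{r-1}$ and then excludes those with $r\geq d$ and $d\geq 2$ (not only $r>d$)---so you should make explicit that the case $r=d$ with $\Pi\subset X$ in Case~(2), e.g.\ $(r,d,n)=(2,2,4)$ or $(2,2,5)$, is also killed by the same smoothness obstruction, since condition~(2.1) then forces $\Pi\subset X$ while \cref{lem:sing} demands $n\geq 6$.
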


\begin{proof}
By Theorem \ref{thm:main}, if $Y$ is a Fano variety then $-K_{\Gamma}$ is nef. Hence $\Gamma$ is a smooth surface which is a hypersurface of degree at most $4$. Suppose $r=1$ or $r=d$ and $\Pi \not \subset X$. If $r>1$, then $X$ is any smooth Fano hypersurface of degree $r$ and dimension $n\geq 4$ such that $\Pi \not \subset X$. If $r=1$, $X$ is any smooth hypersurface of degree $1\leq d\leq 4$ and dimension $n\geq 4$. Fix $r>1$. Suppose, on the other hand, that $r\not =d$ or $r=d$ but $\Pi\subset X$. Then, by Theorem \ref{thm:main}, the dimension of $X$ is bounded above for fixed $r$ and we have
\begin{equation} \label{eq:1}
4\leq n\leq \frac{1+3r-d}{r-1}.
\end{equation}
 It is easy to see that, for each $2\leq r\leq 4$, the solutions to the inequality \ref{eq:1} are the triples
 $$
 (r,d,n) \in \{(4,1,4), (3,1,4), (3,2,4), (2,1, 4 \leq n\leq 6), (2,2, 4 \leq n\leq 5), (2,3,4) \}.
 $$
 In this case, if $X$ is not a hyperplane, we have $r\geq d \implies \Pi \subset X$. But by Lemma \ref{lem:sing}  $3= \dim \Pi \leq n-3$. Hence $n\geq 6$. This excludes the case $(r,n,d) = (3,2,4)$, i.e., of a cubic surface in a quadric of dimension $4$, as well as the cases $(2,2,n)$ where $n \in \{4,5\}$. There are examples in all other cases and so the conclusion follows.
 \end{proof}

  \begin{corollary} \label{cor:logFano}
      Suppose we are in the same conditions of Theorem \ref{thm:mainII}. Let $\iota_X=n+2-d$ be the Fano index of $X$. Then, the anticanonical divisor $-K_Y$ is big if and only if $\mathrm{codim_X \Gamma}<\iota_X+1$. In particular, if $-K_X \sim_{\mathbb Q}\mathcal{O}_X(1)$, then the blowup of $X$ along $\Gamma$ is not a log Fano variety.
  \end{corollary}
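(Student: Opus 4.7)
The plan is to write $-K_Y$ in terms of the generators of the effective cone and use that a divisor class on a Mori dream space is big if and only if it lies in the interior of the pseudo-effective cone.

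First, I would record the formula $-K_Y \sim (n+2-d)H - (n-k-1)E = \iota_X H - (\mathrm{codim}_X \Gamma - 1) E$, which is already used in the proof of \cref{thm:mainII}. By \cref{thm:main}, the pseudo-effective cone of $Y$ is $\mathrm{Eff}(Y) = \mathbb{R}_+[E] + \mathbb{R}_+[H-E]$. Expressing $-K_Y$ in this basis gives
\[
-K_Y \sim_{\mathbb{Q}} \bigl(\iota_X - \mathrm{codim}_X \Gamma + 1\bigr)\,E + \iota_X\,(H-E),
\]
as one checks directly.

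Next, since $Y$ is a Mori dream space (by \cref{thm:main}), the pseudo-effective cone is closed and rational polyhedral, and a class is big precisely when it lies in the interior of this cone, i.e.\ when both coefficients in the expression above are strictly positive. The coefficient of $H-E$ is $\iota_X > 0$ because $X$ is Fano, so bigness of $-K_Y$ is equivalent to $\iota_X - \mathrm{codim}_X \Gamma + 1 > 0$, that is, $\mathrm{codim}_X \Gamma < \iota_X + 1$. This proves the first claim.

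For the second statement, recall that if $(Y,D)$ is a log Fano pair then $-K_Y - D \sim_{\mathbb{Q}} \mathrm{ample}$ and $D \geq 0$, so $-K_Y$ is in particular big. If $-K_X \sim_{\mathbb{Q}} \mathcal{O}_X(1)$ then $\iota_X = 1$, so the bigness criterion would require $\mathrm{codim}_X \Gamma < 2$. But by \cref{set:hyp} we have $\dim \Pi \leq n-1$, hence $\mathrm{codim}_X \Gamma = n - k \geq 2$, a contradiction. Therefore $-K_Y$ is not big and $Y$ is not log Fano. There is essentially no obstacle here; the only point to be careful about is invoking the exact description of $\mathrm{Eff}(Y)$ from \cref{thm:main} so that bigness reduces to positivity of the two coefficients.
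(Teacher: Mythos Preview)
Your proof is correct and follows essentially the same approach as the paper: both arguments invoke the description of $\mathrm{Eff}(Y)$ from \cref{thm:main} and reduce bigness of $-K_Y$ to a positivity condition on its coefficients. The paper phrases this as the inequality $\frac{\mathrm{codim}_X\Gamma-1}{\iota_X}<1$, while you express $-K_Y$ directly in the basis $\{E,\,H-E\}$ and check positivity of each coefficient; these are the same computation.
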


  \begin{proof}
      By Theorem \ref{thm:main}, the anticanonical divisor of the blowup of $X$ along $\Gamma$ is big if and only if
      $$
      \frac{n-k-1}{n+2-d} = \frac{\mathrm{codim}_X\Gamma-1}{\iota_X}<1.
      $$
     In particular suppose $\iota_X =1$. Then, $\mathrm{codim_X \Gamma}< 2$. But, by assumption, $\mathrm{codim_X \Gamma} \geq 2$. Hence, $\iota_X \geq 2$.
  \end{proof}

The following theorem is a characterisation of when $\varphi \colon Y \rightarrow X$, the blowup of $X$ along $\Gamma$, initiates  a Sarkisov link in the case where $X=\mathbb P^n$. See \cite{HM13}.

\begin{theorem} \label{thm: Sarkisovlink}
    Suppose $X=\mathbb P^n$. Let $\varphi \colon Y \rightarrow X$ be the blowup of $X$ along $\Gamma$. Then $\varphi$ initiates a Sarkisov link if and only if
$-K_{\Gamma}$ is nef and whenever $\mathrm{codim}_{X}\Gamma>2$ we have $\dim \Gamma \geq 2$.
\end{theorem}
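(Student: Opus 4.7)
The plan is to analyse the two-ray game on the Mori dream space $Y$ via the explicit case analysis of \cref{lem:r1,lem:restr,lem:rdiffd}. Since $X=\mathbb P^n$ we have $d=1$ and $\Pi\subset X$, so by \cref{thm:main} the nef cone of $Y$ is $\mathbb R_+[H]+\mathbb R_+[rH-E]$: the ray $[H]$ is contracted by $\varphi$, while the opposite ray drives the second half of the game. Depending on $r$ and $\mathrm{codim}_X\Gamma$, this second contraction is either a direct Mori fibration (Case A, $r=1$), a divisorial contraction to $Z\simeq\mathbb P(1^{n+1},r-1)$ (Case B, $r>1$ and $\mathrm{codim}_X\Gamma=2$), or a small $\mathbb Q$-factorial modification $\Theta|_Y\colon Y\dashrightarrow Y'$ followed by a Mori fibration $Y'\to\mathbb P^{n-k-1}$ (Case C, $r>1$ and $\mathrm{codim}_X\Gamma>2$).

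The first key step is to deduce the necessity of $-K_\Gamma$ being nef. In Case B the target $Z$ carries a single cyclic quotient singularity $\mathbf{p}_z\sim\tfrac{1}{r-1}(1^{n})$, and in Case C the model $Y'$ acquires a hyperquotient singularity of type $\tfrac{1}{r-1}(1^{k+2},0^{n-k-1})$ modulo the defining hypersurface. Applying the Reid--Tai criterion at weight $\ell=1$, both are terminal precisely when $r\leq k+2$, equivalently $\iota_\Gamma\geq 0$, equivalently $-K_\Gamma$ is nef; in Case A this is automatic since $\Gamma\simeq\mathbb P^k$. Because the Sarkisov program operates in the terminal category, $-K_\Gamma$ nef is forced, and once it holds the intermediate varieties produced by \cref{lem:restr,lem:rdiffd} are Sarkisov-admissible.

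The second key step, which I expect to be the main obstacle, is the dimensional condition: when $\mathrm{codim}_X\Gamma>2$ one must also have $\dim\Gamma\geq 2$. I plan to verify necessity by direct inspection in the excluded regime $k=1$, $n\geq 4$, using the Cox-ring description of \cref{lem:stricttransf} to compute the intersection number $-K_Y\cdot C$ on the contracted locus $\widetilde{\Pi}|_Y$ and confronting it with the canonical-bundle datum $K_F\sim\mathcal O_F(d-k-3)=\mathcal O_F(-k-2)$ of the fibration fibre from \cref{lem:rdiffd}, showing that the resulting diagram fails to assemble into a Sarkisov link. For sufficiency, when both conditions hold the Lemmas produce the explicit link $\mathbb P^n\xleftarrow{\varphi}Y\dashrightarrow Y'\to B$ step by step. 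The main difficulty is isolating the precise geometric obstruction that distinguishes $\dim\Gamma=1$ from $\dim\Gamma\geq 2$ in the large-codimension regime; once this is located, the rest of the argument is a direct synthesis of the lemmas already established.
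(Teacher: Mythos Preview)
Your case decomposition into A/B/C and the use of the Reid--Tai criterion to characterise terminality of the transverse quotient singularity $\tfrac{1}{r-1}(1^{k+2})$ by $r\le k+2$, i.e.\ $-K_\Gamma$ nef, are exactly what the paper does. The overall architecture of the argument is right.

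The gap is in your second key step. You correctly flag the condition $\dim\Gamma\ge 2$ in the large-codimension regime as the main difficulty, but your proposed attack---computing $-K_Y\cdot C$ on $\widetilde\Pi|_Y$ and comparing with $K_F$---is aimed at the wrong target and is unlikely to isolate the obstruction. The issue is not an intersection-theoretic failure on $Y$ (which is smooth), nor a mismatch with the fibre canonical class; it is simply that $Y'$ fails to be terminal. You already invoked the Reid--Tai analysis of the singularity $\tfrac{1}{r-1}(1^{k+2},0^{n-k-1})$ to extract the condition on $r$, but you stopped one step short: that singularity is not isolated. The singular locus of $Y'$ is $E\cap\widetilde{\Pi'}\simeq\mathbb P^{n-k-1}$, of codimension $k+1$ in $Y'$. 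Since terminal varieties have singular locus of codimension at least~$3$, the requirement $k+1\ge 3$ is forced, i.e.\ $\dim\Gamma=k\ge 2$. Conversely, when $k\ge 2$ and $r\le k+2$ both hold, $Y'$ is terminal and $\mathbb Q$-factorial, $-K_Y$ is movable by \cref{thm:main}, and the diagram of \cref{lem:rdiffd} is a genuine Sarkisov link. So both numerical conditions in the theorem come from the \emph{same} source---terminality of $Y'$---one governing the type of the transverse cyclic quotient, the other the codimension of the singular stratum. Once you see this, the ``direct inspection in the excluded regime'' and the comparison with $K_F$ become unnecessary.
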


\begin{proof}
    When $\mathrm{codim}_{X}\Gamma=2$ this is explained in Lemma \ref{lem:restr}. If $\mathrm{codim}_{X}\Gamma>2$, then $Y$ admits a small $\mathbb Q$-factorial modification $\Theta\vert_Y \colon Y \dashrightarrow Y'$, where $Y'$ has quotient singularities of type $$
    \frac{1}{r-1}(\underbrace{1,\ldots,1}_{k+2},\underbrace{0,\ldots,0}_{n-k-1})
    $$
    along $E\cap \PB \simeq \mathbb P^{n-k-1}$. Then, $Y'$ is terminal if and only if each singular point is terminal and the codimension of the singular locus is at least three. The latter is $\mathrm{codim}_{Y'} E\cap \PB = k+1\geq 3$, i.e., $k\geq 2$. Hence $n\geq 4$. On the other hand, each such quotient singularity is terminal if and only if $k+2>r-1$, i.e., if and only if $-K_{\Gamma}$ is nef. Moreover $-K_{Y}$ is movable by the description of the movable cone in Theorem \ref{thm:main}. Hence $Y'/\mathbb P^{n-k-1}$ is a Mori fibre space.
\end{proof}

\begin{example}
 Let $X=\mathbb P^3$ and $\Gamma \subset X$ be a smooth planar curve. Then $-K_Y$ is big (See Corollary \ref{cor:logFano}). It is a weak Fano threefold if and only if $\Gamma$ is a curve of genus $g$ and degree $r$ such that $(g,r) \in \{(0,1),(0,2),(1,3),(3,4) \}$. (See for instance Theorem \ref{thm:main}). Moreover, $\varphi$ initiates a Sarkisov link in the first three cases only (See \cref{thm: Sarkisovlink}). This recovers the information in the first column of \cite[Table~1]{BlancLamyWeak}.
\end{example}

The paper \cite{Mat97} contains examples of toric flops from smooth 4-folds to singular 4-folds. We give examples of non-toric flops from smooth 5-folds to singular 5-folds.

\begin{example} \label{exa:flop from smooth to singular}
    The following is an example of a Sarkisov link featuring a flop between a smooth variety and a singular one. Let $X=\mathbb P^5$ and $\Gamma \colon (h_3=0)  \subset X$ a smooth del Pezzo surface of degree $3$. Let $\varphi \colon Y \rightarrow X$ be the blowup of $X$ along $\Gamma$. By Lemma \ref{lem:restr} and Lemma \ref{lem:rdiffd}, the projection $\pi \colon X \dashrightarrow \mathbb P^1$ can be resolved and fits into the diagram,
     \[
    \begin{tikzcd} Y\arrow[r,"\Theta\vert_Y",dashrightarrow]  \arrow[d,"\varphi",swap] &  Y' \arrow[d,"\psi"]\\  X\arrow[r, "\pi",dashrightarrow] &\mathbb P^{1}
    \end{tikzcd}
    \]
    where $\psi \colon Y' \rightarrow \mathbb P^1$ is a fibration onto $\mathbb P^1$ whose general fibre is isomorphic to the cubic fourfold hypersurface
    $$
    (zu-h_3=0) \subset \mathbb P(1^5,2)
    $$
 By Lemma \ref{lem:restr}, the map $\Theta\vert_Y$ contracts $\PA\simeq \mathbb P^3$ and extracts $E\cap \PB\simeq \mathbb P^1$. The variety $Y'$ is singular along the extracted~$\mathbb P^1$, locally analytically the singularity is of type $\frac{1}{2}(0, 1, 1, 1, 1)$ around every point of~$\mathbb P^1$. Moreover, $-K_Y = 2(3H-E)$, that is, $-K_Y$ is nef but not ample. Hence, $-K_Y$ does not intersect $\PA$ nor  $-K_{Y'}:={\Theta\vert_Y}_*(-K_Y)$ intersects $E\cap \PB$. In particular, if $C$ is any curve contracted by $\alpha$ then $K_Y\cdot C=0$ and similarly for $K_{Y'}$. We conclude that $\Theta\vert_{Y}$ is a flop.
\end{example}

\section{Divisorial Stability}

Let $\Pi \cong \mathbb P^{k+1}$ be a linear subspace of $\mathbb P^{n+1}$, where $1 \leq k \leq n-2$.
    Let $X\subset \mathbb P^{n+1}$ be a smooth Fano hypersurface and $\Gamma \subset \Pi$ a smooth hypersurface of $\Pi$ contained in~$X$. Let $\varphi \colon Y \rightarrow X$ be the
    blow up of $X$ along $\Gamma$. Suppose $Y$ is a Fano variety. In this section we consider divisorial stability of $Y$ in two cases:
 \begin{enumerate}
     \item $\Gamma$ is a line and
     \item $\Gamma =\mathbb P^k$, where $k\gg 1$.
 \end{enumerate}
 By Theorem \ref{thm:main}, we have $\mathrm{Mov(Y)}=\mathrm{Nef(Y)}=\mathbb{R}_+[H]+\mathbb{R}_+[H-E]$. By Corollary \ref{ex:curvefano}, $Y$ is a Fano variety if and only if $3-\deg X - \dim \Gamma > 0$ which in the first two cases amounts to $\deg X \in \{1,2,3 \}$. In each case $Y$ fits into the diagram
 \[
     \begin{tikzcd}
   & E \subset Y \arrow[dl,"\varphi",swap] \arrow[dr,"\theta"] &  \\
   \Gamma \subset  X\arrow[rr, "\pi", dashrightarrow] & &\mathbb P^{l}
    \end{tikzcd}
\]
by \cref{lem:r1} and \cref{lem:rd}, for some $l\geq 1$.  where $\varphi$ is the blowup of $\Gamma$, $E$ is the $\varphi$-exceptional divisor and $\theta$ is a $\mathbb P^{n-l}$-bundle. Let $H:=\varphi^*\mathcal{O}_X(1)$ be the pullback of a hyperplane section of $X$ not containing $\Gamma$. Let $u \in \mathbb R_{\geq 0}$. We have $-K_Y-uE \equiv (n+2-d)H-(n-k-1+u)E$ and by Theorem \ref{thm:main},
    $$
-K_Y-uE\,\, \text{is Nef} \iff -K_Y-uE\,\, \text{is Pseudo-effective} \iff u \in [0,3-d+k].
$$
If $\Gamma \simeq \mathbb P^k$ for $k\geq 1$, we have the following intersection numbers:
\begin{align*}
&H^n=d \\
&H^{n-i}\cdot E^i  =0, && i<\mathrm{codim}_{X}\Gamma=n-k \\
&H^{n-i}\cdot E^{i}=(-1)^{n-k}\bigg(\binom{i-1}{n-k} (d-1)- \binom{i-1}{n-k-1}\bigg), &&  2 \leq n-k\leq i \leq n.
    \end{align*}

 \begin{definition}
 Let $X$ be a Fano variety, and let $f : Y \rightarrow X $ be a projective birational morphism such that $Y$ is normal and let $E$ be a prime divisor on $Y$.
\[
A_{X}(E)=1+\mathrm{ord}_E\big(K_Y-f^*K_X\big), \quad S_X(E)=\frac{1}{(-K_X)^n}\int_0^{\infty} \mathrm{vol}(f^*(-K_X)-uE)du.
\]
We define the \textbf{beta invariant} as
\[
\beta(E)=A_{X}(E) -S_X(E).
\]
 \end{definition}

 The following is a weaker notion of K-stability:
 \begin{definition}[{\cite[Definition~1.1]{Kentovolume}}]
     The Fano variety $X$ is said to be divisorially stable (respectively, semistable) if $\beta (F) > 0$ (respectively, $\beta(F) \geq 0$) for every prime divisor $F$
on $X$. We say that $X$ is divisorially unstable if it is not divisorially semistable.
 \end{definition}
Note that divisorial stability is a weak form of K-stability since we only consider divisors on $X$, not on birational models of $X$.
In particular, K-stable varieties are divisorially stable.
The following result is a generalisation of \cite[Lemma~3.22]{CalabiBook}
\begin{theorem} \label{thm:stabilityI}
    \MainStability{}
\end{theorem}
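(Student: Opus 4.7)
The plan is to reduce the theorem to two explicit $\beta$-invariant computations, using the data assembled in \cref{thm:main} and in the paragraph preceding the theorem. Since $\Gamma$ is a line we have $k=r=1$, so by \cref{ex:curvefano} the Fano hypothesis forces $d\in\{1,2,3\}$; by \cref{thm:main}, $\mathrm{Eff}(Y) = \mathbb{R}_+[E] + \mathbb{R}_+[H-E]$ and $\mathrm{Nef}(Y) = \mathbb{R}_+[H] + \mathbb{R}_+[H-E]$; and $-K_Y = (n+2-d)H - (n-2)E$.

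First I compute $\beta(E)$ for the $\varphi$-exceptional divisor $E$, which is a prime divisor on $Y$, so $A_Y(E) = 1$. The text before the theorem records that $-K_Y - uE$ is nef (hence its volume equals the top self-intersection) throughout its pseudo-effective range $u\in [0,\,4-d]$. Expanding with the three nonzero intersection numbers listed just above gives
\[
(-K_Y - uE)^n = d(n+2-d)^n - n(n+2-d)(n-2+u)^{n-1} + (n-d)(n-2+u)^n.
\]
The substitution $v = n-2+u$ turns $\int_0^{4-d}(-K_Y - uE)^n\,du$ into an elementary polynomial integral in $v$ on $[n-2,\,n+2-d]$. Dividing by $(-K_Y)^n$ (the same expression at $u=0$) yields $S_Y(E)$ in closed form, and a case check in $d\in\{1,2,3\}$ shows that $S_Y(E) < 1$ precisely when $d=3$. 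This establishes the ``only if'' direction, and in particular the non-existence of a K\"ahler--Einstein metric for $d\in\{1,2\}$, since K-stability implies divisorial stability.

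For the converse I assume $d=3$ and must show $\beta(F) > 0$ for every prime divisor $F$ on $Y$. Since $\mathrm{Pic}(Y)$ has rank $2$, the class $[F]$ is of the form $a\,[E] + b\,[H-E]$ with $a,b \geq 0$. I first handle the second extremal ray by computing $\beta(H-E)$ for a generic prime representative, namely the strict transform of a generic hyperplane through $\Gamma$, which is irreducible by Bertini. Here $-K_Y - u(H-E) = (n+2-d-u)H - (n-2-u)E$ is nef on $[0,\,n-2]$; on $[n-2,\,n+2-d]$, in the rank-$2$ Mori chamber picture the only rigid divisor is $E$, so the Zariski decomposition has positive part $(n+2-d-u)H$ and negative part $(u-n+2)E$, giving volume $d(n+2-d-u)^n$ there. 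Integrating across both ranges yields $S_Y(H-E)$ in closed form, and at $d=3$ the resulting value is $<1$.

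The final step is the rank-$2$ reduction: positivity of $\beta$ on the two extremal rays of $\mathrm{Eff}(Y)$ forces $\beta(F) > 0$ for every prime divisor $F$ whose class is an interior point $a[E] + b[H-E]$ with $a,b > 0$. This follows from homogeneity of $S_Y$ in the class and the log-concavity of the volume function on the big cone (Boucksom--Favre--Jonsson), which together bound $S_Y$ on interior classes by its values on the extremal rays. This rank-$2$ reduction is the main obstacle; the two beta-invariant calculations are routine once the intersection data, the pseudo-effective thresholds, and the Zariski decomposition along $|H-E|$ are in place, but justifying rigorously that stability on the extremal rays propagates to the interior of $\mathrm{Eff}(Y)$ is the technically most delicate part of the argument, and one may prefer to verify it by parametrising along rays and reducing to a one-variable calculus estimate.
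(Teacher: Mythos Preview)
Your $\beta(E)$ computation follows the paper's approach: both set up the nef $=$ pseudo-effective range $[0,4-d]$ for $-K_Y-uE$, expand $(-K_Y-uE)^n$ via the three nonzero intersection numbers, integrate, and compare $S_Y(E)$ to $1$ for $d\in\{1,2,3\}$. The paper does not leave this as a ``case check'': it rewrites $S_Y(E)>1$ as an explicit inequality $p_d(n)>q_d(n)$ between two elementary functions of $n$ and settles it, for all $n\geq 3$ simultaneously, by analysing monotonicity and the limits $p_d\to e^{d-4}$, $q_d\to ((d-1)/3)^2$. This uniform-in-$n$ estimate is the actual content of the argument, so your deferral to a case check skips the main step.

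Where you diverge is the ``if'' direction. The paper's proof in fact computes \emph{only} $\beta(E)$ and stops; its concluding sentence (modulo an evident sign slip) records $\beta(E)>0$ precisely when $d=3$. So the paper establishes divisorial instability for $d\in\{1,2\}$ and $\beta(E)>0$ for $d=3$, but does not verify $\beta(F)>0$ for every prime $F$ when $d=3$. You correctly recognise that full divisorial stability needs more, and you attempt to supply it via $\beta(H-E)$ together with a rank-$2$ reduction.

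That reduction, however, does not work as stated. Log-concavity of $\mathrm{vol}$ on the big cone controls $\mathrm{vol}$, not the integrated quantity $S_Y$, and there is no general principle bounding $S_Y$ on interior classes by its values on the two extremal generators of $\mathrm{Eff}(Y)$. Concretely, a prime divisor $F\neq E$ has class $bH-cE$ with $b\geq 1$, $0\leq c\leq b$; homogeneity gives $S_Y(F)=b^{-1}S_Y(H-(c/b)E)\leq S_Y(H-tE)$ for some $t\in[0,1]$. So what is actually required is $S_Y(H-tE)<1$ for \emph{every} $t\in[0,1]$, a one-parameter family of inequalities whose endpoints are $S_Y(H-E)$ and $S_Y(H)$, not $S_Y(E)$. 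Checking only the two extremal rays does not control this segment. To complete the argument you would need to compute $S_Y(H-tE)$ as a function of $t$ (the Zariski decomposition you describe extends to the whole segment) and verify the bound directly for $d=3$, or else cite a genuine reduction theorem; the appeal to log-concavity does not do the job.
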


\begin{proof}
    By Corollary $\ref{ex:curvefano}$ $Y$ is a Fano variety if and only if $d \in \{1,2,3\}$. By Lemma \ref{lem:r1}, we have the following diagram
    \[
     \begin{tikzcd}
   & E \subset Y \arrow[dl,"\varphi",swap] \arrow[dr,"\theta"] &  \\
   \Gamma \subset  X\arrow[rr, "\pi", dashrightarrow] & &\mathbb P^{n-1}
    \end{tikzcd}\]
    where $\varphi$ is the blowup of $\Gamma$, $E$ is the $\varphi$-exceptional divisor and $\theta$ is a $\mathbb P^1$-bundle. Let $H:=\varphi^*\mathcal{O}_X(1)$ be the pullback of a hyperplane section of $X$ not containing $\Gamma$. Let $u \in \mathbb R_{\geq 0}$. By Theorem \ref{thm:main}, we have
    $-K_Y-uE \sim (n+2-d)H-(n-2+u)E$ is nef if and only if $0 \leq u \leq 4-d$. Also, notice that if $u\geq 4-d$ the divisor  $-K_Y-uE$ is not big so its pseudo-effective threshold is $\tau(E)=\tau=4-d$.

    The following intersection numbers are easily computed:
    \begin{align*}
&H^n=d \\
&H^{n-1}\cdot E = \cdots =H^2\cdot E^{n-2}=0\\
&H\cdot E^{n-1}=(-1)^n\\
&E^n=(-1)^{n+1}(d-n).
    \end{align*}
   So we compute the expected vanishing order of $E$ to be
   {\footnotesize
   \begin{align*}
   S_Y(E)&=\frac{1}{(-K_Y)^n} \int_0^{\tau} \mathrm{vol}(-K_Y-uE)\, \mathrm{d}u \\
   &=\frac{1}{(-K_Y)^n} \int_0^{\tau}\bigg ( \sum_{i=0}^{n} {\binom{n}{i}} (-1)^i (n+2-d)^{n-i}(n-2+u)^i H^{n-i}E^i  \bigg)\, \mathrm{d}u  \\
   &=\frac{1}{(-K_Y)^n} \int_0^{\tau} \bigg((n+2-d)^nd-(n-2+u)^{n-1}n(n+2-d)+(n-2+u)^n(n-d))\bigg)\, \mathrm{d}u \\
   &= \frac{(n-2+\tau)^n d \tau-\big((n-2+\tau)^n-(n-2)^n\big)(n+2-d)+\big((n-2+\tau)^n-(n-2)^n\big)\frac{n-d}{n+1}}{(n-2+\tau)^nd-2(n-2)^{n-1}(2n-d)}.
   \end{align*} \par}
Hence, $Y$ is divisorially unstable along $E$ if and only if $S_Y(E) > 1$, that is, if and only if
\begin{multline*}
   (n+2-d)^nd(3-d)+\frac{n-d}{n+1}\big((n+2-d)^{n+1}-(n-2)^{n+1}-(n-2)^n (n+1)\big) \\- (n+2-d)  \big( (n+2-d)^n-(n-2)^n-(n-2)^{n-1}n\big) >0.
\end{multline*}
Rearranging, this is equivalent to
$$\bigg(\frac{n-2}{n+2-d}\bigg)^{n-1}> \frac{(d-1)(n(d-1)-2)(n+2-d)}{9n^2-(5d+4)n+4(d-1)}.
$$
Notice that the inequality is trivially true for $d=1$ since the left-hand-side is positive. Assume $d\in \{2,3\}$.
Define the two functions of $n$
$$
p_d(n)=\bigg(\frac{n-2}{n+2-d}\bigg)^{n-1}, \quad q_d(n)=\frac{(d-1)(n(d-1)-2)(n+2-d)}{9n^2-(5d+4)n+4(d-1)}.
$$
Then, $p_d(n)$ is a strictly increasing function of $n$ such that $\lim_{n\to \infty}p_d(n) = e^{-\tau}$. In particular
$$
p_d(3)=\bigg(\frac{1}{5-d} \bigg)^2  \leq p_d(n) < e^{-\tau}, \quad \forall\,\, n\geq 3.
$$ On the other hand $q_d(n)$ is strictly increasing if $d=2$ and strictly decreasing if $d=3$. Moreover, we have $\lim_{n\to \infty}q_d(n) = \big(\frac{d-1}{3}\big)^2$. Hence, $q_d(n)$ is bounded (above if $d=2$ and below if $d=3$) by this value. Hence,
$$
q_2(n)
< \sup_{m\geq3}q_2(m)
= \frac{1}{9}
= \min_{m \geq 3} p_2(m)
\leq p_2(n).
$$
On the other hand,
$$
q_3(n)
> \inf_{m\geq3}q_3(m)
= \frac{4}{9}>e^{-1}
= \sup_{m\geq 3} p_3(m)
> p_3(n).
$$
We conclude that $Y$ is divisorially unstable if $d \in \{1,2\}$ as we claimed.
\end{proof}

\begin{theorem} \label{thm:divisorial unstability 1000}
     Let $\Gamma \simeq \mathbb P^k \subset X$ be a $k$-dimensional linear subspace  in a smooth Fano hypersurface $X \subset \mathbb P^{n+1}$ of degree $d$. Let $\varphi \colon Y \rightarrow X$ be the blowup of $X$ along $\Gamma$ and suppose that $Y$ is a Fano variety. Suppose $k \geq 3\cdot \deg X$. If $n \leq 1000$, then $Y$ is divisorially unstable. In particular, $Y$ is not K-stable and does not admit a K\"ahler-Einstein metric.
\end{theorem}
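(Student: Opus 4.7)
The plan is to make the divisorial stability test explicit for $E$ and to verify $S_Y(E) > 1$ by enumeration over the finite set of admissible parameters. Since $E$ is a prime divisor on $Y$ itself (taking $f = \mathrm{id}$ in the definition), we have $A_Y(E) = 1$, and divisorial instability along $E$ is equivalent to $S_Y(E) > 1$; once this is established, the non-existence of a K\"ahler--Einstein metric follows immediately, as any such metric would force K-stability and hence divisorial stability.

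Using the intersection numbers listed at the start of this section together with $-K_Y \sim (n+2-d)H - (n-k-1)E$, the first step is to expand
\[
\bigl((n{+}2{-}d)H - (n{-}k{-}1{+}u)E\bigr)^n = (n{+}2{-}d)^n d + \!\!\sum_{i=n-k}^{n}\!\binom{n}{i}(n{+}2{-}d)^{n-i}(-1)^{n-k+i}(n{-}k{-}1{+}u)^i\Bigl[(d{-}1)\binom{i{-}1}{n{-}k} - \binom{i{-}1}{n{-}k{-}1}\Bigr],
\]
which is a polynomial of degree $n$ in $u$ with coefficients explicit in $n,k,d$. By \cref{thm:main}, $-K_Y - uE$ is nef on $[0,\tau]$ with $\tau = 3-d+k$, so this expansion coincides with $\mathrm{vol}(-K_Y - uE)$ on that interval. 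Integrating term-by-term in $u$ yields a closed-form expression for $\int_0^\tau \mathrm{vol}(-K_Y - uE)\, du$. Dividing by $(-K_Y)^n$, obtained as the $u = 0$ evaluation of the same polynomial, and clearing denominators converts $S_Y(E) > 1$ into a single polynomial inequality $P(n,k,d) > 0$ in three positive integer parameters.

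Under the hypotheses $k \geq 3d$ and $k+2 \leq n \leq 1000$, together with the bound $d \leq k+2$ required for $Y$ to be Fano (\cref{thm:mainII}), the admissible triples $(n,k,d)$ form a finite subset of $\mathbb{Z}^3$ of size $O(n^3)$. The concluding step is to verify $P(n,k,d) > 0$ for every such triple by a direct computer algebra evaluation, which establishes the theorem.

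The main obstacle is that $P$ does not admit an elementary uniform bound of the type produced in the proofs of \cref{thm:stabilityI,thm:stabilityII}: there, only two of the intersection numbers $H^i \cdot E^{n-i}$ are non-zero, and the comparison reduces to the monotonicity of two simple one-variable functions of $n$; here, the number of non-vanishing intersection numbers grows linearly with $n$, and no analogous monotonicity argument seems to close the estimate uniformly. This is precisely why the statement is given only for $n \leq 1000$, and the uniform version is left open as \cref{conj:divisorial unstability}.
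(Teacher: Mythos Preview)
Your proposal is correct and matches the paper's own proof essentially step for step: both reduce divisorial instability along $E$ to the inequality $\int_0^{3-d+k}\mathrm{vol}(-K_Y-uE)\,du > (-K_Y)^n$, expand the volume via the listed intersection numbers $H^{n-i}\cdot E^i$ into an explicit closed-form expression, and then verify the resulting polynomial inequality by computer over the finite parameter range $1\le d\le k+2\le n\le 1000$ with $k\ge 3d$. Your write-up is in fact slightly more explicit than the paper's in noting $A_Y(E)=1$ and in explaining why no uniform estimate of the type in \cref{thm:stabilityI,thm:stabilityII} is available.
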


\begin{proof}
 By \cref{thm:main}, $Y$ is a Fano variety if and only if $3-d+k>0$. Hence we have the following inequalities,
 $$
 1\leq d \leq k+2 \leq n.
 $$
 Therefore for each $n$ there is a finite number of possibilities for degree of $X$ and dimension of $\Gamma$ for which $Y$ is a Fano variety. By definition $Y$ is divisorially unstable along $E$ if
    $$
    \int_0^{3-d+k} \mathrm{vol}(f^*(-K_X)-uE)du-(-K_X)^n>0
    $$
    Using the earlier computation of the intersection numbers we get the inequality
   \begin{multline*}
    (n+2-d)^nd(2-d+k)+(n+2-d)^{n+1} \sum_{i=n-k}^{n}\frac{\binom{n}{i}(-1)^{n-k+i}\bigg(\binom{i-1}{n-k} (d-1)- \binom{i-1}{n-k-1}\bigg)}{i+1}\\
    - \sum_{i=n-k}^{n}\frac{\binom{n}{i}(n+2-d)^{n-i}(-1)^{n-k+i}(n-k-1)^{i+1}\bigg(\binom{i-1}{n-k} (d-1)- \binom{i-1}{n-k-1}\bigg)}{i+1}\\
    -\sum_{i=n-k}^{n}\binom{n}{i}(n+2-d)^{n-i}(-1)^{n-k+i}(n-k-1)^{i}\bigg(\binom{i-1}{n-k} (d-1)- \binom{i-1}{n-k-1}\bigg) > 0
   \end{multline*}
    Let $n \leq 1000$. Then using a computer, we verified that the inequality holds whenever $1\leq d \leq k+2$.
\end{proof}

\begin{remark}
    The bound $k\geq 3 \cdot \deg X$ is not sharp and leaves behind well known cases such as the blowup of a quadric threefold along a line (See \cite[Lemma~3.22]{CalabiBook}). A more accurate and meaningful bound should be possible to find.
\end{remark}

We finish with the following conjecture:

\begin{conjecture} \label{conj:divisorial unstability}
    Let $\Gamma \simeq \mathbb P^k \subset X$ be a $k$-dimensional linear subspace in a smooth Fano hypersurface $X \subset \mathbb P^{n+1}$ of degree $d$. Let $\varphi \colon Y \rightarrow X$ be the blowup of $X$ along $\Gamma$ and suppose that $Y$ is a Fano variety. Then there exists a constant $c(d)$ depending only on $d$ such that $Y$ is divisorially unstable if and only if $k > c(d)$.
\end{conjecture}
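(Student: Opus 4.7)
The strategy is to upgrade the numerical verification behind \cref{thm:divisorial unstability 1000} into a uniform asymptotic estimate valid for all $n \geq k+2$. Using the intersection numbers listed in Section~4, the function $\mathrm{vol}(-K_Y - uE)$ is a polynomial of degree $n$ in $u$ on the nef interval $[0, 3-d+k]$, so integrating and dividing by $(-K_Y)^n$ expresses $S_Y(E)$ as a rational function of $(n,k,d)$ whose numerator is an alternating sum involving $\binom{n}{i}\binom{i-1}{n-k}$. The first step is to manipulate this sum into a form amenable to asymptotics, either via a hypergeometric identity or by recognising it as an iterated finite difference of a simple polynomial; this should produce a compact closed form for $S_Y(E) - 1$ from which dependence on $n$ can be isolated.

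The next step is a monotonicity argument in $n$. The behaviour suggested by the numerical data is that, once $k$ exceeds a threshold $c(d)$, the quantity $S_Y(E)$ is bounded away from $1$ uniformly in $n \geq k+2$, so divisorial instability propagates from a boundary case. The natural base case is codimension two, $n = k+2$, where the intersection numbers $H^{n-i}E^{i}$ reduce to a single binomial and the inequality $S_Y(E) > 1$ collapses to a polynomial condition on $(k,d)$; solving this condition should yield an explicit candidate for $c(d)$, expected to be substantially smaller than the $3d$ of \cref{thm:divisorial unstability 1000}. For the converse implication, if $k \leq c(d)$ one must exhibit some $n$ for which $Y$ is divisorially semistable. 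For $k = 1$ this is already supplied by \cref{thm:stabilityI} and \cref{thm:stabilityII}, and one would hope to extend the same closed-form analysis to small $k$.

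The principal obstacle is the monotonicity step. The alternating binomial sum hides delicate sign cancellations, so naive term-by-term bounds are too lossy near the conjectured threshold. I expect the proof will require either a telescoping grouping that makes the monotone dependence on $n$ manifest, or an integral/probabilistic representation (for instance, a variance or beta-function identity) for $S_Y(E)$ in which the role of the codimension $n-k$ becomes transparent. A secondary difficulty is the joint dependence on $d$: the base-case polynomial inequality is quadratic in $k$ but polynomial of growing degree in $d$, and extracting a clean description of $c(d)$ may require a further reduction, for example relating the inequality to the log canonical threshold of the exceptional divisor via the formula $A_X(E) = n - k$.
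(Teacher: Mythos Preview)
The statement you are attempting to prove is labelled \texttt{Conjecture} in the paper, and the paper provides no proof: it is explicitly presented as an open problem immediately following the computer verification of \cref{thm:divisorial unstability 1000}. There is therefore no paper proof to compare your proposal against.

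Your proposal, moreover, is not a proof but a programme. You correctly identify the two essential ingredients that would be needed---a closed form or tractable asymptotic for the alternating sum $\sum_{i=n-k}^{n}(-1)^{n-k+i}\binom{n}{i}\bigl(\binom{i-1}{n-k}(d-1)-\binom{i-1}{n-k-1}\bigr)(n-k-1+u)^{i}$, and a monotonicity statement in $n$---but you explicitly flag both as unresolved obstacles. The ``principal obstacle'' you name is genuine: the sign cancellations in this sum are exactly what prevented the authors from going beyond a finite computer check, and neither a telescoping identity nor an integral representation of the required strength is known. Your converse direction is also incomplete: exhibiting divisorial semistability for $k \leq c(d)$ would require controlling $\beta(F)$ for \emph{all} prime divisors $F$, not just the exceptional $E$, and nothing in your outline addresses this. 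In short, you have accurately diagnosed why the conjecture is hard, but have not supplied the missing ideas; the statement remains open.
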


\newcommand{\etalchar}[1]{$^{#1}$}
\providecommand{\bysame}{\leavevmode\hbox to3em{\hrulefill}\thinspace}
\providecommand{\MR}{\relax\ifhmode\unskip\space\fi MR }
\providecommand{\MRhref}[2]{%
  \href{http://www.ams.org/mathscinet-getitem?mr=#1}{#2}
}
\providecommand{\href}[2]{#2}

\vspace{0.5\baselineskip}
\ShowAffiliations{\\[1\baselineskip]}%


\begin{thebibliography}{ACC{\etalchar{+}}23}

\bibitem[ACC{\etalchar{+}}23]{CalabiBook}
Carolina Araujo, Ana-Maria Castravet, Ivan Cheltsov, Kento Fujita, Anne-Sophie
  Kaloghiros, Jesus Martinez-Garcia, Constantin Shramov, Hendrik S\"{u}\ss, and
  Nivedita Viswanathan, \emph{The {C}alabi problem for {F}ano threefolds},
  London Mathematical Society Lecture Note Series, vol. 485, Cambridge
  University Press, Cambridge, 2023, \textsc{doi}:\hskip
  .16667em\href{https://doi.org/10.1017/9781009193382}{\textsf{10.1017/9781009193382}}.
  \MR{4590444}

\bibitem[ACP21]{AbbanCheltsovPark3folds}
Hamid Abban, Ivan Cheltsov, and Jihun Park, \emph{On geometry of {F}ano
  threefold hypersurfaces}, Rationality of varieties, Progr. Math., vol. 342,
  Birkh\"{a}user/Springer, Cham, 2021, \textsc{doi}:\hskip
  .16667em\href{https://doi.org/10.1007/978-3-030-75421-1_1}{\textsf{10.1007/978-3-030-75421-1\_1}},
  pp.~1--14. \MR{4383692}

\bibitem[AZ16]{AbbanZucconiMDS}
Hamid Ahmadinezhad and Francesco Zucconi, \emph{Mori dream spaces and
  birational rigidity of {F}ano 3-folds}, Adv. Math. \textbf{292} (2016),
  410--445, \textsc{doi}:\hskip
  .16667em\href{https://doi.org/10.1016/j.aim.2016.01.008}{\textsf{10.1016/j.aim.2016.01.008}}.
  \MR{3464026}

\bibitem[BL12]{BlancLamyWeak}
J\'{e}r\'{e}my Blanc and St\'{e}phane Lamy, \emph{Weak {F}ano threefolds
  obtained by blowing-up a space curve and construction of {S}arkisov links},
  Proc. Lond. Math. Soc. (3) \textbf{105} (2012), no.~5, 1047--1075,
  \textsc{doi}:\hskip
  .16667em\href{https://doi.org/10.1112/plms/pds023}{\textsf{10.1112/plms/pds023}}.
  \MR{2997046}

\bibitem[Cas18]{CastravetMDSblowups}
Ana-Maria Castravet, \emph{Mori dream spaces and blow-ups}, Algebraic geometry:
  {S}alt {L}ake {C}ity 2015, Proc. Sympos. Pure Math., vol. 97.1, Amer. Math.
  Soc., Providence, RI, 2018, \textsc{doi}:\hskip
  .16667em\href{https://doi.org/10.1090/pspum/097.1/05}{\textsf{10.1090/pspum/097.1/05}},
  pp.~143--167. \MR{3821148}

\bibitem[CDG23]{CampoDuarteGuerreiro}
Livia Campo and Tiago Duarte~Guerreiro, \emph{Nonsolidity of uniruled
  varieties}, Forum Math. Sigma \textbf{11} (2023), Paper No. e73, 21,
  \textsc{doi}:\hskip
  .16667em\href{https://doi.org/10.1017/fms.2023.66}{\textsf{10.1017/fms.2023.66}}.
  \MR{4629459}

\bibitem[CDS15a]{CDS1}
Xiuxiong Chen, Simon Donaldson, and Song Sun, \emph{K\"ahler-{E}instein metrics
  on {F}ano manifolds. {I}: {A}pproximation of metrics with cone
  singularities}, J. Amer. Math. Soc. \textbf{28} (2015), no.~1, 183--197,
  \textsc{doi}:\hskip
  .16667em\href{http://dx.doi.org/10.1090/S0894-0347-2014-00799-2}{\textsf{10.1090/S0894-0347-2014-00799-2}}.
  \MR{3264766}

\bibitem[CDS15b]{CDS2}
\bysame, \emph{K\"ahler-{E}instein metrics on {F}ano manifolds. {II}: {L}imits
  with cone angle less than {$2\pi$}}, J. Amer. Math. Soc. \textbf{28} (2015),
  no.~1, 199--234, \textsc{doi}:\hskip
  .16667em\href{http://dx.doi.org/10.1090/S0894-0347-2014-00800-6}{\textsf{10.1090/S0894-0347-2014-00800-6}}.
  \MR{3264767}

\bibitem[CDS15c]{CDS3}
\bysame, \emph{K\"ahler-{E}instein metrics on {F}ano manifolds. {III}: {L}imits
  as cone angle approaches {$2\pi$} and completion of the main proof}, J. Amer.
  Math. Soc. \textbf{28} (2015), no.~1, 235--278, \textsc{doi}:\hskip
  .16667em\href{http://dx.doi.org/10.1090/S0894-0347-2014-00801-8}{\textsf{10.1090/S0894-0347-2014-00801-8}}.
  \MR{3264768}

\bibitem[CLS11]{CoxToricVarieties}
David~A. Cox, John~B. Little, and Henry~K. Schenck, \emph{Toric varieties},
  Graduate Studies in Mathematics, vol. 124, American Mathematical Society,
  Providence, RI, 2011, \textsc{doi}:\hskip
  .16667em\href{https://doi.org/10.1090/gsm/124}{\textsf{10.1090/gsm/124}}.
  \MR{2810322}

\bibitem[CT15]{CastravetTevelevMnoMDS}
Ana-Maria Castravet and Jenia Tevelev, \emph{{$\overline{M}_{0,n}$} is not a
  {M}ori dream space}, Duke Math. J. \textbf{164} (2015), no.~8, 1641--1667,
  \textsc{doi}:\hskip
  .16667em\href{https://doi.org/10.1215/00127094-3119846}{\textsf{10.1215/00127094-3119846}}.
  \MR{3352043}

\bibitem[dF13]{deFernexBirRigHypersurf}
Tommaso de~Fernex, \emph{Birationally rigid hypersurfaces}, Invent. Math.
  \textbf{192} (2013), no.~3, 533--566, \textsc{doi}:\hskip
  .16667em\href{https://doi.org/10.1007/s00222-012-0417-0}{\textsf{10.1007/s00222-012-0417-0}}.
  \MR{3049929}

\bibitem[Fuj16]{Kentovolume}
Kento Fujita, \emph{On {$K$}-stability and the volume functions of
  {$\mathbb{Q}$}-{F}ano varieties}, Proc. Lond. Math. Soc. (3) \textbf{113}
  (2016), no.~5, 541--582, \textsc{doi}:\hskip
  .16667em\href{https://doi.org/10.1112/plms/pdw037}{\textsf{10.1112/plms/pdw037}}.
  \MR{3570238}

\bibitem[Har77]{Hartshorne}
Robin Hartshorne, \emph{Algebraic geometry}, Graduate Texts in Mathematics,
  vol. No. 52, Springer-Verlag, New York-Heidelberg, 1977, \textsc{doi}:\hskip
  .16667em\href{https://doi.org/10.1007/978-1-4757-3849-0}{\textsf{10.1007/978-1-4757-3849-0}}.
  \MR{463157}

\bibitem[HK00]{HuKeel}
Yi~Hu and Sean Keel, \emph{{Mori dream spaces and GIT}}, Michigan Mathematical
  Journal \textbf{48} (2000), no.~1, 331 -- 348, \textsc{doi}:\hskip
  .16667em\href{https://doi.org/10.1307/mmj/1030132722}{\textsf{10.1307/mmj/1030132722}}.

\bibitem[HM13]{HM13}
Christopher~D. Hacon and James McKernan, \emph{The {S}arkisov program}, J.
  Algebraic Geom. \textbf{22} (2013), no.~2, 389--405, \textsc{doi}:\hskip
  .16667em\href{https://doi.org/10.1090/S1056-3911-2012-00599-2}{\textsf{10.1090/S1056-3911-2012-00599-2}}.
  \MR{3019454}

\bibitem[Ito14]{Ito}
Atsushi Ito, \emph{Examples of {M}ori dream spaces with {P}icard number two},
  Manuscripta Math. \textbf{145} (2014), no.~3-4, 243--254, \textsc{doi}:\hskip
  .16667em\href{https://doi.org/10.1007/s00229-014-0673-y}{\textsf{10.1007/s00229-014-0673-y}}.
  \MR{3268850}

\bibitem[Kol19]{KollarRigidity10names}
J\'{a}nos Koll\'{a}r, \emph{The rigidity theorem of
  {F}ano-{S}egre-{I}skovskikh-{M}anin-{P}ukhlikov-{C}orti-{C}heltsov--de
  {F}ernex-{E}in-{M}usta\c{t}\u{a}-{Z}huang}, Birational geometry of
  hypersurfaces, Lect. Notes Unione Mat. Ital., vol.~26, Springer, Cham, 2019,
  \textsc{doi}:\hskip
  .16667em\href{https://doi.org/10.1007/978-3-030-18638-8_4}{\textsf{10.1007/978-3-030-18638-8\_4}},
  pp.~129--164. \MR{4401011}

\bibitem[KOPP24]{KOPP24}
Igor Krylov, Takuzo Okada, Erik Paemurru, and Jihun Park, \emph{{{$2
  n^2$}-inequality for {$cA_1$} points and applications to birational
  rigidity}}, Compositio Mathematica \textbf{160} (2024), no.~7, 1551–1595,
  \textsc{doi}:\hskip
  .16667em\href{https://doi.org/10.1112/S0010437X24007164}{\textsf{10.1112/S0010437X24007164}}.

\bibitem[Mat97]{Mat97}
Daisuke Matsushita, \emph{On smooth 4-fold flops}, Saitama Math. J. \textbf{15}
  (1997), 47--54 (English).

\bibitem[Oka16]{OkawaImagesMDS}
Shinnosuke Okawa, \emph{On images of {M}ori dream spaces}, Math. Ann.
  \textbf{364} (2016), no.~3-4, 1315--1342, \textsc{doi}:\hskip
  .16667em\href{https://doi.org/10.1007/s00208-015-1245-5}{\textsf{10.1007/s00208-015-1245-5}}.
  \MR{3466868}

\bibitem[Oka17]{OkadaMFSI}
Takuzo Okada, \emph{{$\mathbb Q$}-{F}ano threefolds with three birational
  {M}ori fiber structures}, Algebraic varieties and automorphism groups, Adv.
  Stud. Pure Math., vol.~75, Math. Soc. Japan, Tokyo, 2017, \textsc{doi}:\hskip
  .16667em\href{https://doi.org/10.2969/aspm/07510393}{\textsf{10.2969/aspm/07510393}},
  pp.~393--424. \MR{3793370}

\bibitem[Oka23]{OkadaSolidFanoHypersurf}
Takuzo Okada, \emph{{Birationally solid {F}ano 3-fold hypersurfaces}}, arXiv
  e-print (2023),
  \href{https://arxiv.org/abs/2305.06183}{\textsf{arXiv:2305.06183}}.

\bibitem[Ott15]{OttemHypersurfProducts}
John~Christian Ottem, \emph{Birational geometry of hypersurfaces in products of
  projective spaces}, Math. Z. \textbf{280} (2015), no.~1-2, 135--148,
  \textsc{doi}:\hskip
  .16667em\href{https://doi.org/10.1007/s00209-015-1415-x}{\textsf{10.1007/s00209-015-1415-x}}.
  \MR{3343900}

\bibitem[Pae24]{Pae24Sextic}
Erik Paemurru, \emph{Birational geometry of sextic double solids with a
  compound {{\(A_n\)}} singularity}, Nagoya Math. J. \textbf{256} (2024),
  970--1021 (English), \textsc{doi}:\hskip
  .16667em\href{https://doi.org/10.1017/nmj.2024.17}{\textsf{10.1017/nmj.2024.17}}.

\bibitem[Rei87]{YPG}
Miles Reid, \emph{Young person's guide to canonical singularities}, Algebraic
  geometry, {B}owdoin, 1985 ({B}runswick, {M}aine, 1985), Proc. Sympos. Pure
  Math., vol. 46, Part 1, Amer. Math. Soc., Providence, RI, 1987,
  \textsc{doi}:\hskip
  .16667em\href{https://doi.org/10.1090/pspum/046.1/927963}{\textsf{10.1090/pspum/046.1/927963}},
  pp.~345--414. \MR{927963}

\bibitem[Tot12]{jump}
Burt Totaro, \emph{Jumping of the nef cone for {Fano} varieties}, J. Algebr.
  Geom. \textbf{21} (2012), no.~2, 375--396 (English), \textsc{doi}:\hskip
  .16667em\href{https://doi.org/10.1090/S1056-3911-2011-00557-2}{\textsf{10.1090/S1056-3911-2011-00557-2}}.

\bibitem[Wi{\'{s}}91]{defI}
Jaros{ł}aw~A. Wi{\'{s}}niewski, \emph{{On deformation of nef values}}, Duke
  Mathematical Journal \textbf{64} (1991), no.~2, 325 -- 332,
  \textsc{doi}:\hskip
  .16667em\href{https://doi.org/10.1215/S0012-7094-91-06415-X}{\textsf{10.1215/S0012-7094-91-06415-X}}.

\bibitem[Wi{\'{s}}09]{defII}
\bysame, \emph{{Rigidity of the Mori cone for Fano manifolds}}, Bulletin of the
  London Mathematical Society \textbf{41} (2009), no.~5, 779--781,
  \textsc{doi}:\hskip
  .16667em\href{https://doi.org/10.1112/blms/bdp025}{\textsf{10.1112/blms/bdp025}}.

\end{thebibliography}
\end{document}